\newtheorem{definition}{Definition} 
\newtheorem{lemma}[definition]{Lemma}
\newtheorem{thm}[definition]{Theorem}
\newtheorem*{rep@theorem}{\rep@title}
\newcommand{\newreptheorem}[2]{%
\newenvironment{rep#1}[1]{%
 \def\rep@title{#2 \ref{##1} (restatement)}%
 \begin{rep@theorem}}%
 {\end{rep@theorem}}}
\begin{document}
\title{Exponential rate of convergence for some Markov operators}
\author{Hanna Wojew\'odka}
\affiliation{Institute of Mathematics, University of Gda\'nsk, Wita Stwosza
  57, 80-952 Gda\'nsk, Poland}
\email{hwojewod@mat.ug.edu.pl}

\begin{abstract} {The exponential rate of convergence for some Markov operators is established. The operators correspond to continuous iterated function systems which are a very useful tool in some cell cycle models.}
\end{abstract}

\keywords{Markov operators; exponential rate of convergence}

\maketitle

\section{Introduction}
We are concerned with Markov operators corresponding to continuous iterated function systems. The main purpose of~the paper is to prove spectral gap assuring exponential rate of~convergence. The operators under consideration were used in \citet{lasotam}, where the authors studied some cell cycle model. See also \citet{tysonhannsgen} or \citet{murrayhunt} to get more details on the subject. Lasota and Mackey proved only stability, while we managed to evaluate rate of~convergence, bringing some information important from biological point of~view. In our paper we base on coupling methods introduced in \citet{hairer}. In the same spirit, exponential rate of~convergence was proved in \citet{sleczka} for classical iterated function systems (see also \citet{hairerm} or \citet{kapica}). It is worth mentioning here that our result will allow us to show the Central Limit Theorem (CLT) and the Law of~Iterated Logarithm (LIL). To do this, we will adapt general results recently proved in \citet{bms} or in \citet{komorowskiwalczuk}. The proof of~CLT and LIL will be provided in a future paper.

The organization of~the paper goes as follows. Section $2$ introduces basic notation and definitions that are needed throughout the paper. Most of~them are adapted from \citet{billingsley}, \citet{tweedie}, \citet{lasotay} and \citet{szarek}. Biological background is shortly presented in~Section $3$. Sections $4$ and $5$ provide the mathematical derivation of~the model and the main theorem (Theorem \ref{main}), which establishes the exponential rate of~convergence in the model. Sections $6$-$8$ are devoted to the construction of~coupling measure for iterated function systems. Thanks to the results presented in Section $9$ we are finally able to present the proof of~the main theorem in~Section $10$.

\section{Notation and basic definiotions}

Let $(X,\varrho)$ be a Polish space. We denote by $B_X$ the family of all Borel subsets of $X$. 
Let $C(X)$ be the space of all bounded and continuous functions $f:X\to R$ with the supremum norm. 

We denote by $M(X)$ the family of all Borel measures on $X$ and by $M_{fin}(X)$ and $M_1(X)$ its subfamilies such that $\mu(X)<\infty$ and~$\mu(X)=1$, respectively. 
Elements of $M_{fin}(X)$ which satisfy $\mu(X)\leq 1$ are called sub-probability measures. To simplify notation, we write
\[\langle f,\mu\rangle =\int_X f(x)\mu(dx)\quad\text{for}\: f\in C(X),\: \mu\in M(X).\]
An operator $P:M_{fin}(X)\to M_{fin}(X)$ is called a Markov operator if
\begin{enumerate}
\item $P(\lambda_1\mu_1+\lambda_2\mu_2)=\lambda_1 P\mu_1+\lambda_2 P\mu_2\quad\text{ for }\:\lambda_1,\lambda_2\geq 0,\; \mu_1, \mu_2\in M_{fin}(X)$;
\item $P\mu(X)=\mu(X)\quad\text{ for \:$\mu\in M_{fin}(X)$}$.
\end{enumerate}
If, additionally, there exists a linear operator $U:C(X)\to C(X)$ such that
\[\langle Uf,\mu\rangle =\langle f,P\mu\rangle \quad\text{for}\: f\in C(X),\:\mu\in M_{fin}(X),\]
an operator $P$ is called a Feller operator. Every Markov operator $P$ may be extended to the space of signed measures on $X$ denoted by $M_{sig}(X)=\{\mu_1-\mu_2:\; \mu_1,\mu_2\in M_{fin}(X)\}$. For $\mu\in M_{sig}(X)$ we denote by $\|\mu\|$ the total variation norm of $\mu$, i.e.
\[\|\mu\|=\mu^+(X)+\mu^-(X),\]
where $\mu^+$ and $\mu^-$ come from the Hahn-Jordan decomposition of $\mu$ (see \citet{halmos}). 
For fixed $\bar{x}\in X$ we also consider the space $M_1^1(X)$ of all probability measures with the first moment finite, i.e. $M_1^1(X)=\{\mu\in M_1(X):\:\int_X\varrho(x,\bar{x})\mu(dx)<\infty\}$. The family is idependent of the choice of $\bar{x}\in X$. 
We call $\mu_*\in M_{fin}(X)$ an invariant measure of $P$ if $P\mu_*=\mu_*$. 
For $\mu\in M_{fin}(X)$ we define the support of $\mu$ by
\[\text{supp }\mu=\{x\in X:\; \mu(B(x,r))>0\quad \text{for }\:r>0\},\]
where $B(x,r)$ is the open ball in $X$ with center at $x\in X$ and radius $r>0$.

In $M_{sig}(X)$ we introduce the Fourtet-Mourier norm
\[\|\mu\|_{\mathcal{L}}=\sup_{f\in\mathcal{L}}|\langle f,\mu\rangle |,\]
where 
\begin{align}\label{def:mathcal_L}
\mathcal{L}=\{f\in C(X):\;|f(x)-f(y)|\leq\varrho(x,y),\;|f(x)|\leq 1\;\text{ for }\:x,y\in X\}.
\end{align}
The space $M_1(X)$ with the metric $\|\mu_1-\mu_2\|_{\mathcal{L}}$ is complete (see \citet{fortet} or \citet{rachev}). 

\section{Shortly about the model of cell division cycle}

Let $(\Omega,\mathcal{F},\text{Prob})$ be a probability space. 
Suppose that each cell in a considered population consists of $d$ different substances, whose masses are described by the vector 
$y(t)=(y^1(t),\ldots,y^d(t))$, where $t\in[0,T]$ denotes an age of a cell. 
We assume that the evolution of the vector $y(t)$ is given by the formula $y(t)=\Pi(x,t)$, where $\Pi(x,0)=x$. Here $\Pi:X\times[0,T)\to X$ is a given function. A~simple example fulfilling these criteria is given by assuming that $y(t)$ satisfies a system of ordinary differential equations
\begin{equation}\label{diffeq}
\frac{dy}{dt}=g(t,y)
\end{equation}
with the initial condition $y(0)=x$ and the solution of $(\ref{diffeq})$ is given by $y(t)=\Pi(x,t)$.

If $x_n$ denotes the initial value $x=y(0)$ of substances in the $n$-th generation and $t_n$ denotes the mitotic time in the $n$-th generation, the distribution is given by 
\begin{equation}\label{def:p}
\text{Prob}(t_n\in I|x_n=x)=\int_Ip(x,s)ds\quad \text{ for }I\in[0,T],\:n\in N\text{.}
\end{equation}
The vector $y(t_n)=\Pi(x_n,t_n)$ with $y(0)=\Pi(x,0)=x$ describes an amount of intercellular substance just before cell division in the $n$-th generation. We assume that each daughter cell contains exactly half of the components of its stem cell. Hence
\begin{equation}\label{def:Pi}
x_{n+1}=\frac{1}{2}\Pi(x_n,t_n)\quad \text{ for }\:n=0,1,2,\ldots\:\text{.}
\end{equation}
The bahaviour of $(\ref{def:p})$ and $(\ref{def:Pi})$ may be also described by the sequence $(\mu_n)_{n\geq 1}$ of distributions
\[\mu_n(A)=\text{Prob}(x_n\in A) \quad\text{ for }\: n=0,1,2,\ldots\:\text{ and }\:A\in B_{X}\text{.}\]
See \citet{lasotam} for more details.

\section{Assumptions}\label{sec:assumptions}

We assume that $(X,\varrho)$ is a Polish space. 
Fix $T<\infty$. 
We consider a family $\{t_n:\:n=0,1,\ldots\}$ of indepenent random variables taking values in $[0,T]$. The family is defined on the probability space $(\Omega,\mathcal{F},\text{Prob})$.
Note that $\text{Prob}(t_n<T|x_n=x)=1$. 
Let $S:X\times[0,T)\to X$ be a~continuous function and
\[x_{n+1}=S(x_n,t_n),\quad n=0,1,2,\ldots\:\text{.}\]
We assume that $p:X\times[0,T)\to[0,\infty)$ is a~lower semi-continuous, non-negative function such that, for every $x\in X$, $p(x,0)=0$ and $p(x,t)>0$ for $t>0$. In addition, $p$ is normalized, i.e. $\int_0^Tp(x,u)du=1$ for $x\in X$. 
Let us further assume that for each $A\in B_X$
\[\text{Prob}(x_{n+1}\in A):=\mu_{n+1}(A), \quad\text{and}\quad P\mu_n=\mu_{n+1}\text{,}\]
where
\begin{equation}\label{def:Pmu}
P\mu(A)=\int_X\left(\int_0^T 1_A(S(x,t))p(x,t)dt\right)\mu(dx)\text{.}
\end{equation}

The following assumptions will be needed throughout the paper:
\begin{itemize}
\item[(I)]  $\varrho(S(x,t),S(y,t))\leq \lambda(t)\varrho(x,y)$ for $x,y\in X$, where $\lambda:[0,T)\to[0,\infty)$ is a~Borel measurable function;
\item[(II)] $a:=\sup_{x\in X}\int_0^T\lambda(t)p(x,t)dt<1$; 
\item[(III)] $\sup_{t\in[0,T)}\varrho\left(S(\bar{x},t),\bar{x}\right)<\infty$ for some $\bar{x}\in X$;
\item[(IV)] there exists $\sigma$ such that $p:X\times[0,T)\to[\sigma,\infty)$ is a~continuous function and $\bar{c}>0$ such that $\int_0^T|p(x,t)-p(y,t)|dt\leq \bar{c}\varrho(x,y)$ for $x,y\in X$;
\item[(V)] function $p$ is bounded and we assume that $\delta=\inf\{p(x,t): x\in X, t\in(0,T)\}>0$, $M=\sup\{p(x,t):x\in X, t\in(0,T)\}$.
\end{itemize}

\section{Main theorem}

Let $P$ be the Markov operator in the cell division model defined above. Lasota and Mackey proved asymptotic stability of $P$, i.e. the existence of an invariant measure $\mu_*\in M_1(X)$ and weak convergence of $(P^n{\mu})$ to $\mu_*$ for $\mu\in M_1(X)$. The theorem says.

\begin{thm}\label{tLM}
Let $S:X\times[0,T]\to X$ and $p:X\times[0,T]\to[0,\infty)$ satisfy the following conditions
\begin{enumerate}
\item $\varrho(S(x,t),S(y,t))\leq\lambda_0(x,t)\varrho(x,y)$ for $x,y\in X$, $t\in[0,T]$ and $\lambda_0$ and $S$ related to $p$ by the conditions
$\int_0^T\lambda_0(x,t)p(x,t)dt\leq r_0$ and $\int_0^T |S(0,t)|p(x,t)dt\leq r_1$ for $x\in X$;
\item $\int_0^T|p(x,t)-p(y,t)|dt\leq r_2\varrho(x,y)$ for $x,y\in X$;
\item for every $x\in X$ there exists a minimal division time $\tau_x\in[0,T]$ such that $p(x,t)=0$ for $0\leq t\leq\tau_x$ and $p(x,t)>0$ for $\tau_x<t\leq T$.
\end{enumerate}
We assume moreover that $r_0<1$ and $r_1,r_2<\infty$. Then, the system $(\ref{def:p})$ and $(\ref{def:Pi})$ is asymptotically stable.
\end{thm}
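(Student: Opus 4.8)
I would prove Theorem~\ref{tLM} by the classical lower-bound scheme for Markov--Feller operators (see \citet{lasotay}, \citet{szarek}), which here amounts to checking three things: that $P$ is Feller and admits a geometric Lyapunov function (this gives tightness of all orbits and, with a Krylov--Bogolyubov argument, an invariant measure $\mu_*\in M_1(X)$); that $P$ does not increase distances between measures, in a suitable Fourtet--Mourier type metric built from the averaged contraction in hypotheses~(1)--(2); and that the minimal-division-time hypothesis~(3) forces the iterates $P^n\mu$ to accumulate a uniform amount of mass near a single point, which upgrades the previous two items to the asserted asymptotic stability (in particular uniqueness of $\mu_*$ and $\|P^n\mu-\mu_*\|_{\mathcal L}\to 0$ for all $\mu\in M_1(X)$).

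For the Lyapunov step I would take $V(x)=\varrho(x,\bar x)$, where $\bar x$ is the reference point ($\bar x=0$ in the model). Since $\varrho(S(x,t),\bar x)\le\lambda_0(x,t)\varrho(x,\bar x)+\varrho(S(\bar x,t),\bar x)$, integrating against $p(x,t)\,dt$ and invoking $\int_0^T\lambda_0(x,t)p(x,t)\,dt\le r_0$ together with the moment bound $\int_0^T|S(0,t)|p(x,t)\,dt\le r_1$ (which controls the remaining term) yields the drift inequality $\langle V,P\mu\rangle\le r_0\langle V,\mu\rangle+c$ with $c<\infty$ — made rigorous by first applying the dual $U$ to the truncations $V\wedge k$. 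Iterating gives $\langle V,P^n\mu\rangle\le r_0^n\langle V,\mu\rangle+c/(1-r_0)$, uniformly bounded in $n$ for $\mu\in M_1^1(X)$; since in the state space of the model (substance masses, a closed subset of a Euclidean space) the sublevel sets $\{V\le R\}$ are compact, Chebyshev's inequality gives tightness of $\{P^n\mu:n\ge 0\}$. Continuity of $S$ and the Lipschitz bound on $p$ in hypothesis~(2) make $Uf(x)=\int_0^T f(S(x,t))p(x,t)\,dt$ continuous for $f\in C(X)$, so $P$ is Feller; any weak limit of the Ces\`aro averages $n^{-1}\sum_{k<n}P^k\mu$ is then an invariant measure $\mu_*$, and the drift bound places $\mu_*$ in $M_1^1(X)$.

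For the contraction step, fix $f\in\mathcal L$ and split $Uf(x)-Uf(y)$ into $\int_0^T\bigl(f(S(x,t))-f(S(y,t))\bigr)p(x,t)\,dt$, bounded by $\int_0^T\lambda_0(x,t)p(x,t)\,dt\cdot\varrho(x,y)\le r_0\varrho(x,y)$ because $f\in\mathcal L$, plus $\int_0^T f(S(y,t))\bigl(p(x,t)-p(y,t)\bigr)\,dt$, bounded by $\int_0^T|p(x,t)-p(y,t)|\,dt\le r_2\varrho(x,y)$; together with $|Uf|\le 1$ this shows $U$ sends $\mathcal L$ into a fixed Lipschitz class. Since $r_2$ need not be $\le 1$, one does not get non-expansiveness in the bare $\|\cdot\|_{\mathcal L}$ for free, so I would carry this estimate over to an equivalent bounded metric adapted to the tight family of orbits (equivalently, feed the drift bound into the version of the Lasota--Yorke theorem that trades non-expansiveness for the Lyapunov condition). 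The decisive step is then the lower bound: hypothesis~(3) says $p(x,\cdot)$ vanishes up to a minimal time $\tau_x$ and is strictly positive afterwards, so the one-step law $P\delta_x=\int_0^T\delta_{S(x,t)}p(x,t)\,dt$ charges the whole arc $\{S(x,t):\tau_x<t\le T\}$; combining this with $r_0<1$ (which, by the Lyapunov step, keeps almost all of the mass of $P^n\mu$ inside a fixed ball $\{V\le R\}$) one shows that these arcs, issued from all points of $\{V\le R\}$, overlap in a neighbourhood of a suitable point $z$, on which a uniform amount of mass $\alpha=\alpha(\varepsilon)>0$ accumulates: $\liminf_{n}P^n\mu(B(z,\varepsilon))\ge\alpha$ for every $\mu\in M_1(X)$. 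Feeding tightness, the contraction estimate and this lower bound into the Lasota--Yorke criterion gives the claimed asymptotic stability.

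I expect the main obstacle to be exactly this last step: extracting, from the purely qualitative positivity in hypothesis~(3) together with the averaged contraction, a genuinely \emph{uniform} lower bound on the mass all orbits eventually place near one point — one must simultaneously prevent the orbits from spreading out and from drifting to infinity, which is where the Lyapunov estimate and the continuity of $S$ have to be used with care. A secondary technical point is that $X$ is only assumed Polish, so the compactness of the sublevel sets $\{V\le R\}$ underlying tightness has to be imported from the concrete structure of the cell-cycle state space rather than from the abstract hypotheses.
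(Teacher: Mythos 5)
The paper does not prove Theorem~\ref{tLM} at all: it is quoted verbatim from \citet{lasotam} as a known result, used only to guarantee the existence of the invariant measure $\mu_*$ that the paper's own contribution (Theorem~\ref{main}, exponential rate) refers to. So there is no in-paper proof to compare your attempt against; what you have written has to be judged against the original Lasota--Mackey argument, which is indeed the lower-bound technique of \citet{lasotay} that you invoke. Your overall architecture (Feller property, Lyapunov drift with $V(x)=\varrho(x,\bar x)$, a contraction estimate for the dual operator on $\mathcal L$, and a uniform lower bound forced by the minimal-division-time hypothesis) is the right one, and the drift computation $\langle V,P\mu\rangle\le r_0\langle V,\mu\rangle+c$ is correct as you set it up.

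However, as a proof the proposal has two genuine gaps, both of which you flag yourself but neither of which you close. First, the decisive lower bound: you assert that the arcs $\{S(x,t):\tau_x<t\le T\}$ issued from all points of a sublevel set $\{V\le R\}$ ``overlap in a neighbourhood of a suitable point $z$'' with a uniform mass $\alpha>0$. Nothing in hypotheses (1)--(3) guarantees such an overlap --- positivity of $p(x,\cdot)$ after $\tau_x$ gives each measure $P\delta_x$ full support on its own arc, but two arcs issued from distant $x$ need not meet, and making them meet is exactly where the contractivity $r_0<1$ has to be iterated quantitatively (so that after $n$ steps all trajectories started in $\{V\le R\}$ are within $\epsilon$ of one another with positive probability). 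This is the heart of the Lasota--Mackey proof and it is absent here. Second, the contraction step is not actually established: you correctly observe that the estimate $|Uf(x)-Uf(y)|\le (r_0+r_2)\varrho(x,y)$ does not yield nonexpansiveness in $\|\cdot\|_{\mathcal L}$ when $r_0+r_2>1$, and you propose to pass to ``an equivalent bounded metric adapted to the tight family of orbits,'' but you never construct that metric or verify that $P$ is nonexpansive with respect to it; without this the Lasota--Yorke criterion cannot be applied. A third, smaller issue is that tightness via compact sublevel sets of $V$ is imported from the concrete cell-cycle state space rather than derived from the stated hypotheses ($X$ is only Polish), so as written the argument does not prove the theorem in the generality in which it is stated. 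In short: correct strategy, but the two steps that carry the proof are still open.
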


Obviously, conditions (i) and (ii) of Theorem $\ref{tLM}$ are satisfied by assumptions (I)-(IV) of the model in consideration. Note that condition (iii) is also fulfilled with $\tau_x=0$, as for every $x\in X$ we have $p(x,0)=0$ and $p(x,t)>0$ for every $t>0$ and $x\in X$. That is why we can assume the existance of an invariant measure in the model.

Our aim is to show that rate of convergence is exponential.
\begin{thm}\label{main}
Let $\mu\in M_1^1$. Under assumptions (I)-(V) there exist $C=C(\mu)>0$ and $q\in[0,1)$ such that
\[\|P^n\mu-\mu_*\|_{\mathcal{L}}\leq Cq^n\quad \text{ for }n\in N.\]
\end{thm}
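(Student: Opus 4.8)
The plan is to build a coupling between two copies of the Markov chain and show that the coupled distance contracts geometrically in expectation, which by a standard argument upgrades to exponential convergence in the Fourtet–Mourier norm. First I would set up the transition structure: recall from \eqref{def:Pmu} that one step of the chain maps $x$ to $S(x,t)$ where $t$ is drawn with density $p(x,\cdot)$ on $[0,T)$. Given two points $x,y$, I want to couple the two division times $t$ and $s$ so that with high probability $t=s$, using the overlap of the densities $p(x,\cdot)$ and $p(y,\cdot)$; the lower bound $\delta$ from assumption~(V) together with the Lipschitz bound $\int_0^T|p(x,t)-p(y,t)|\,dt\le\bar c\,\varrho(x,y)$ from~(IV) guarantees that the total-variation distance between these two densities is both bounded away from $1$ locally and controlled linearly by $\varrho(x,y)$. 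On the event $\{t=s\}$ assumption~(I) gives $\varrho(S(x,t),S(y,t))\le\lambda(t)\varrho(x,y)$, and then~(II), $a=\sup_x\int_0^T\lambda(t)p(x,t)\,dt<1$, is precisely what makes the conditional expected distance contract by a factor $a$; on the complementary event one uses assumption~(III) to keep the distance from blowing up, bounding $\varrho(S(x,t),S(y,s))$ by something like $\varrho(S(x,t),\bar x)+\varrho(\bar x,S(y,s))$, which is finite uniformly in $t,s$.

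The next step is to iterate. Writing $(\Phi_n,\Psi_n)$ for the coupled chains started from $(\delta_x,\delta_y)$, I would show a recursive inequality of the form $\mathbb{E}[\varrho(\Phi_{n+1},\Psi_{n+1})\mid\mathcal{F}_n]\le a\,\varrho(\Phi_n,\Psi_n)+\text{(error term from the non-coalescing event)}$, where the error is itself proportional to the probability of that event, which in turn is $O(\varrho(\Phi_n,\Psi_n))$. Combining these, one gets a bound $\mathbb{E}[\varrho(\Phi_n,\Psi_n)]\le \tilde C\, q^n(1+\varrho(x,y))$ for some $q\in(\max(a,\text{overlap defect}),1)$, possibly after absorbing constants; here assumption~(III) together with~(I)–(II) is used to control how far the chain can wander, i.e.\ to show $\sup_n\mathbb{E}\,\varrho(\Phi_n,\bar x)<\infty$ when started from a measure in $M_1^1$. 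Integrating $x$ against $\mu$ and $y$ against $\mu_*$ (whose existence and membership in $M_1$ follows from Theorem~\ref{tLM}, and whose first moment is finite by the same wandering estimate) and using that $\|P^n\mu-\mu_*\|_{\mathcal{L}}\le \mathbb{E}[\min(1,\varrho(\Phi_n,\Psi_n))]\le\mathbb{E}[\varrho(\Phi_n,\Psi_n)]$ for any coupling, we obtain $\|P^n\mu-\mu_*\|_{\mathcal{L}}\le C(\mu)q^n$.

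I expect the main obstacle to be the construction of the coupling itself and the careful bookkeeping in the mixed event where the times do not coalesce: one needs a joint law on $(t,s)$ whose marginals are exactly $p(x,\cdot)$ and $p(y,\cdot)$, that puts as much mass as possible on the diagonal (mass $\ge 1-\tfrac12\int|p(x,t)-p(y,t)|dt$), and that is measurable in $(x,y)$ so that the coupled chain is well defined — this is where \citet{hairer} and the continuous analogue of \citet{sleczka} are the right templates. A secondary subtlety is that a single step only contracts provided the non-coalescing probability is small, which is only true when $\varrho(x,y)$ is small; for far-apart points one instead relies on the $a<1$ contraction on the coalescing event plus the uniform wandering bound, so the cleanest route is probably to first establish the moment/wandering estimates in a preliminary lemma, then prove a one-step drift inequality valid for all $x,y$, and finally iterate. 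The constant $C(\mu)$ will depend on $\mu$ only through $\int_X\varrho(x,\bar x)\,\mu(dx)$, which is exactly why the hypothesis $\mu\in M_1^1$ appears.
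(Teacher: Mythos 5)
Your starting point is the same as the paper's: the sub-probability coupling $Q_{x,y}(A\times B)=\int_0^T\min\{p(x,t),p(y,t)\}\,\delta_{(S(x,t),S(y,t))}(A\times B)\,dt$ built from the overlap of the two densities, completed to a genuine coupling $C_{x,y}=Q_{x,y}+R_{x,y}$ measurable in $(x,y)$ (Lemma \ref{lemma1}), and the final reduction $\|P^n\mu-\mu_*\|_{\mathcal L}\le\int_X\int_X\|P^n_x-P^n_y\|_{\mathcal L}\,\mu_*(dy)\,\mu(dx)$ is exactly how the paper concludes. The problem is the middle of your argument: the one-step recursion $\mathbb{E}[\varrho(\Phi_{n+1},\Psi_{n+1})\mid\mathcal F_n]\le a\,\varrho(\Phi_n,\Psi_n)+(\text{error})$ with an error that is $O(\varrho(\Phi_n,\Psi_n))$ is not available, and iterating it does not close. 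The error term is not just the probability $\le\bar c\,\varrho(\Phi_n,\Psi_n)$ of the non-coalescing event; it is that probability weighted by the distance $\varrho(S(x,t),S(y,s))$ realized on that event, which by (I) and (III) is of order $\lambda(t)\varrho(x,\bar x)+\lambda(s)\varrho(y,\bar x)+2\tilde c$, i.e.\ of order $\bar V(\Phi_n,\Psi_n)$ rather than of order a constant (note that $\lambda$ is only Borel with $\int_0^T\lambda(t)p(x,t)\,dt\le a$, not bounded, so you also cannot convert $\int\lambda(t)|p(x,t)-p(y,t)|\,dt$ into $\bar c\,\varrho(x,y)$). The true one-step bound is therefore of the shape $a\varrho_n+O(\varrho_n\wedge 1)\cdot O(\bar V_n)$, and the cross term $\mathbb E[\varrho_n\bar V_n]$ is not controlled by $\mathbb E\varrho_n\cdot\sup_n\mathbb E\bar V_n$. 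Your fallback for far-apart points (``the $a<1$ contraction on the coalescing event plus the wandering bound'') does not produce decay either: after a single non-coalescing step the two trajectories can be reset to a mutual distance of order $\bar V$, and nothing in your sketch prevents this from happening repeatedly. Passing to the truncated metric $\min(1,\varrho)$ does not help, since it yields a factor $a+\bar c$, and no smallness relation between $a$, $\bar c$, $\tilde c$ is assumed.

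This is precisely the obstruction that forces the paper to abandon a one-step Wasserstein contraction in favour of the Hairer-type coupling-time argument, and that layer is what is missing from your proposal: (i) the Lyapunov drift $\langle\bar V,\bar b\rangle\le a\langle\bar V,b\rangle+2c$ gives exponential moments of the hitting time of the bounded set $K_\varepsilon$ (Theorem \ref{theorem1}); (ii) Lemma \ref{lemma2} and Theorem \ref{theorem2} give a uniform lower bound $\bar\gamma^{n_0}/2$ on the probability that, starting from $K_\varepsilon$, \emph{every} subsequent step is drawn from $Q$ (here assumptions (IV)--(V) enter, not merely to make one step coalesce but to make the whole tail of the trajectory stay on the ``good'' branch); (iii) a geometric-trials argument then gives exponential moments of the coupling time $\tau$; and (iv) Theorem \ref{theorem4} splits $\hat C^{\infty}_{x,y}$ on $\{\tau\le n/2\}$, where $\phi(Q_b)\le a\phi(b)$ is iterated $n/2$ times, versus $\{\tau>n/2\}$, controlled by Chebyshev. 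Your instinct to ``first establish moment estimates, then a drift inequality, then iterate'' is close in spirit, but the object whose exponential moment you need is the coupling time, not the one-step distance; without steps (i)--(iii) the claimed bound $\mathbb E[\varrho(\Phi_n,\Psi_n)]\le\tilde C q^n(1+\varrho(x,y))$ is unsubstantiated.
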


\section{Measures on the pathspace and coupling}
We consider a family of measures $\{Q_x:x\in X\}$ on $X$. We assume measurability of the mappings $x\mapsto Q_x(A)$ for each $A\in B_X$. 
Fix $n,m\in N$. Now, suppose that $\{Q_x:x\in X\}$ is a family of measures on $X^n$ and $\{R_x:x\in X\}$ is a family of measures on $X^m$. We can define a family of measures $\{(RQ)_x:x\in X\}$ on $X^n\times X^m$
\begin{equation}\label{constr:RQ}
(RQ)_x(A\times B)=\int_A R_{z_n}(B)Q_x(dz),
\end{equation}
where $z=(z_1,\ldots,z_n)$ and $A\in B_{X^n}$, $B\in B_{X^m}$.

We consider a family of sub-probability measures $\{P_x:x\in X\}$ on $X$. 
We assume that the mapping $x\mapsto P_x(A)$ is measurable for each $A\in B_X$. Furthermore, if each $P_x$ is a probability measure, $\{P_x:x\in X\}$ is a transition probability function. Thus $P_x(A)$ is the probability of transition from~$x$ to~$A$. 
We want to define a family of measures on $X^{\infty}$. Set $x\in X$. One-dimensional distributions $\{P_x^n:n\in N\}$ are defined by induction on $n$
\begin{align}\label{constr:n+1}
\begin{aligned}
P_x^0(A)&=\delta_x(A), \;\ldots,\; P_x^{n+1}(A)&=\int_X P_z(A)P_x^n(dz),
\end{aligned}
\end{align}
where $A\in B_X$.
Following $(\ref{constr:RQ})$, we easily obtain two and higher-dimentional distributions. Finally, we get the family $\{P_x^{\infty}:x\in X\}$ of sub-probability measures on $X^{\infty}$. This construction was motivated by \citet{hairer}. The existance of measures $P_x^{\infty}$ is established by the Kolmogorov theorem. More precisely, there exists some probability space, on which we can define a~stochastic proces $\xi$ with distribution $\phi_{\xi}$ such that
\[\phi_{\xi}(A)=\text{Prob}(\xi^{-1}(A)):=P_x^{\infty}(A)\quad\text{ for $A\in B_{X^{\infty}}$.}\] 
Therefore, $P_x^{\infty}$ is the distribution of the Markov chain $\xi$ on $X^{\infty}$ with transition probability function $\{P_x: x\in X\}$ and $\phi_{\xi_0}=\delta_x$ for $x\in X$. If an initial distribution is given by any $\mu\in M_{fin}(X)$, not necessarily by $\delta_x$, we define
\[P_{\mu}^{\infty}(A)=\int_X P_x^{\infty}(A)\mu(dx)\quad\text{ for $A\in B_{X^{\infty}}$}.\]

\begin{definition}
Let a transition probability function $\{P_x:x\in X\}$ be given. A~family of probability measures $\{C_{x,y}:x,y\in X\}$ on $X\times X$ such that
\begin{itemize}
\item $C_{x,y}(A\times X)=P_x(A)$ \;for $A\in B_X$,
\item $C_{x,y}(X\times B)=P_y(B)$ \;for $B\in B_X$,
\end{itemize}
where $x,y\in X$, is called coupling.
\end{definition}

\section{Iterated function systems}\label{sec:ifs}

We consider a continuous mapping $S:X\times[0,T)\to X$ and a lower semi-continuous, non-negative normalized function $p:X\times[0,T)\to[0,\infty)$. 
For each $A\in B_X$ we build a transition operator $P_x(A)=\Pi(x,A)$. 
Since $P\mu$ is given by $(\ref{def:Pmu})$ and $(P\mu)(A)=\int_X P_x(A)\mu(dx)$, we define $P_x$ to be
\[P_x(A)=\int_0^T 1_A(S(x,t))p(x,t)dt=\int_0^T\delta_{S(x,t)}(A)p(x,t)dt.\]
Once again, we apply $(\ref{constr:RQ})$ and $(\ref{constr:n+1})$ to construct measures on products. As previously, $P_{\mu}^{\infty}$ exists for $\mu\in M_{fin}(X)$. Obviously, $P^n{\mu}$ is the $n$-th marginal of $P_{\mu}^{\infty}$.

Fix $\bar{x}\in X$. We define $V:X\to[0,\infty)$ to be
\[V(x)=\varrho(x,\bar{x}).\]
Let us evalute an integral $\langle V,P\mu\rangle =\int_X\varrho(x,\bar{x})P\mu(dx)=\int_X U\varrho(x,\bar{x})\mu(dx)$, 
where $U$ is a dual operator to $P$. 
Since $P$ is a Feller operator given by $(\ref{def:Pmu})$, we can define $U:C(X)\to C(X)$ by
\[Uf(x)=\int_0^Tf(S(x,t))p(x,t)dt.\]
Hence, from initial assumptions (I) and (II), we obtain
\begin{align*}
\begin{aligned}
\langle V,P\mu\rangle &=\int_X\left(\int_0^T\varrho\left(S(x,t),\bar{x}\right)p(x,t)dt\right)\mu(dx)\\
&\leq\int_X\left(\int_0^T\left(\varrho(S(x,t),S(\bar{x},t))+\varrho(S(\bar{x},t),\bar{x})\right)p({x},t)dt\right)\mu(dx)\\
&\leq\int_X\left(\int_0^T\lambda(t)\varrho(x,\bar{x})p({x},t)dt+\int_0^T\varrho(S(\bar{x},t)\bar{x})p({x},t)dt\right)\mu(dx)\\
&\leq a\int_X\varrho(x,\bar{x})\mu(dx)+\int_X \tilde{c}\mu(dx)\\
&=a\langle V,\mu\rangle +c,
\end{aligned}
\end{align*}
where $c=\int_X \tilde{c}\mu(dx)$ and $\tilde{c}=\sup_{t\in[0,T)}\varrho(S(\bar{x},t),\bar{x})$ exists from assumption (III). 
Fix probability measures $\mu,\nu\in M_1^1(X)$ and Borel sets $A,B\in B_X$. We consider $b\in M_1(X^2)$ such that
\[b(A\times X)=\mu(A)\text{,}\qquad b(X\times B)=\nu(B)\]
and $\bar{b}\in M_1(X^2)$ such that
\[\bar{b}(A\times X)=P\mu(A)\text{,}\qquad \bar{b}(X\times B)=P\nu(B)\text{.}\]
Furthermore, we define $\bar{V}:X^2\to[0,\infty)$
\[\bar{V}(x,y)=V(x)+V(y)\quad\text{for $x,y\in X$.}\]
Note that
\begin{align}\label{prop:barV}
\langle \bar{V},\bar{b}\rangle \:\leq \:a\langle \bar{V},b\rangle +2c.
\end{align}
For measures $b\in M_{fin}^1(X^2)$ finite on $X^2$ and with the first moment finite we define the~linear functional
\[\phi(b)=\int_{X^2}\varrho(x,y)b(dx,dy).\]
Following the above definitions, we easily obtain 
\begin{align}\label{prop:phi}
\phi(b)\:\leq\:\langle \bar{V},b\rangle .
\end{align}

\section{Coupling for itereted function systems}

On $X^{\infty}$ we define the transition sub-probability function
\begin{equation}\label{def:Qxy}
Q_{x,y}(A\times B)=\int_0^T \min\{p(x,t),p(y,t)\}\delta_{(S(x,t),S(y,t))}(A\times B)dt\quad\text{ for $\:A,B\in B_{X}$}.
\end{equation}
It is easy to check that
\begin{align*}
\begin{aligned}
Q_{x,y}(A\times X)&\leq\int_0^Tp(x,t)\delta_{S(x,t)}(A)dt
&=\int_0^T1_A(S(x,t))p(x,t)dt
&=P_x(A)
\end{aligned}
\end{align*}
and analogously 
\[Q_{x,y}(X\times B)\leq P_y(B).\]
Let $Q_b$ denote the measure
\begin{equation}\label{def:Qb}
Q_b(A\times B)=\int_{X^2}Q_{x,y}(A\times B)b(dx,dy)\quad\text{ for }\:A,B\in B_{X}\text{.}
\end{equation}
Note that for every $A,B\in B_{X}$ we obtain
\begin{align*}
\begin{aligned}
Q_b^{n+1}(A\times B)&=\int_{X^2}Q_{x,y}^{n+1}(A\times B)b(dx,dy)\\
&=\int_{X^2}\int_{X^2}Q_{z_1,z_2}(A\times B)Q_{x,y}^n(dz_1,dz_2)b(dx,dy)\\
&=\int_{X^2}Q_{z_1,z_2}(A\times B)\int_{X^2}Q_{x,y}^n(dz_1,dz_2)b(dx,dy)\\
&=\int_{X^2}Q_{z_1,z_2}(A\times B)Q_b^n(dx,dy)=Q_{Q_b^n}(A\times B).
\end{aligned}
\end{align*}
Again, we are able to construct measures on products, as well as we are able to construct $Q_b^{\infty}$ on~$X^{\infty}$. 
Now, we check that
\begin{equation}\label{prop:phiQb}
\phi(Q_b)\leq a\phi(b).
\end{equation}
Let us observe that
\begin{align*}
\begin{aligned}
\phi(Q_b)
&=\int_{X^2}\int_{X^2}\varrho(x,y)Q_{u,v}(dx,dy)b(du,dv)\\
&=\int_{X^2}\int_0^T\left(\int_{X^2}\varrho(x,y)\min\{p(u,t),p(v,t)\}\delta_{(S(u,t),S(v,t))}(dx,dy)\right)dt\:b(du,dv)\\
&\leq\int_{X^2}\int_0^T\left(\varrho(S(u,t),S(v,t))p(u,t)\right)dt\:b(du,dv)\\
&\leq\int_{X^2}\int_0^T\lambda(t)\varrho(u,v)p(u,t)dt\:b(du,dv)\\
&\leq a\int_{X^2}\varrho(u,v)b(du,dv)\\
&=a\phi(b).
\end{aligned}
\end{align*}

We can find such a measure $R_{x,y}$ that the sum of $Q_{x,y}$ and $R_{x,y}$ gives a new coupling measure $C_{x,y}$, i.e. $C_{x,y}(A\times X)=P_x(A)$ and $C_{x,y}(X\times B)=P_y(B)$ for $A,B\in B_X$.

\begin{lemma}\label{lemma1}
There exists the family $\{R_{x,y}:x,y\in X\}$ of measures on~$X^2$ such that we can define
\[C_{x,y}=Q_{x,y}+R_{x,y}\quad\text{ for $x,y\in X$}\]
and, moreover, 
\begin{itemize}
\item[(i)] the mapping $(x,y)\mapsto R_{x,y}(A\times B)$ is measurable for every $A,B\in B_{X}$;
\item[(ii)] measures $R_{x,y}$ are non-negative for $x,y\in X$;
\item[(iii)] measures $C_{x,y}$ are probabilistic for every $x,y\in X$ and so $\{C_{x,y}:x,y\in X\}$ is the~transition probability function on $X^2$;
\item[(iv)] for every $A,B\in B_X$ and $x,y\in X$ we get $C_{x,y}(A\times X)=P_x(A)$ and $C_{x,y}(X\times B)=P_y(B)$.
\end{itemize}
\end{lemma}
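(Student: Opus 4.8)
The plan is to let $R_{x,y}$ be a normalized product of the two ``residual'' measures that remain after subtracting $Q_{x,y}$ from $P_x$ and from $P_y$ on the respective coordinates. Put
\[
\alpha(x,y)=Q_{x,y}(X\times X)=\int_0^T\min\{p(x,t),p(y,t)\}\,dt .
\]
The inequality already displayed before the lemma, applied with $A=X$ (resp.\ $B=X$), shows $0\le\alpha(x,y)\le 1$, and it shows that the set functions $A\mapsto P_x(A)-Q_{x,y}(A\times X)$ and $B\mapsto P_y(B)-Q_{x,y}(X\times B)$ are non-negative finite Borel measures on $X$, each of total mass $1-\alpha(x,y)$. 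If $\alpha(x,y)<1$ we define, on rectangles,
\[
R_{x,y}(A\times B)=\frac{1}{1-\alpha(x,y)}\bigl(P_x(A)-Q_{x,y}(A\times X)\bigr)\bigl(P_y(B)-Q_{x,y}(X\times B)\bigr),
\]
and if $\alpha(x,y)=1$ (which forces both residual measures to be the zero measure) we set $R_{x,y}=0$; in either case $R_{x,y}$ extends uniquely to a measure on $B_{X^2}$.

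Properties (ii)--(iv) are then immediate verifications. Non-negativity (ii) is clear from the construction. For (iv), $C_{x,y}(A\times X)=Q_{x,y}(A\times X)+R_{x,y}(A\times X)$, and since $P_y(X)-Q_{x,y}(X\times X)=1-\alpha(x,y)$, the second summand equals $P_x(A)-Q_{x,y}(A\times X)$ when $\alpha(x,y)<1$ (and equals $0=P_x(A)-Q_{x,y}(A\times X)$ when $\alpha(x,y)=1$); hence $C_{x,y}(A\times X)=P_x(A)$, and symmetrically $C_{x,y}(X\times B)=P_y(B)$. Taking $A=B=X$ gives $C_{x,y}(X\times X)=1$, so each $C_{x,y}$ is a probability measure and $\{C_{x,y}\}$ is a transition probability function on $X^2$, which is (iii). (Note in passing that $C_{x,x}=Q_{x,x}$ is the diagonal coupling, since then $\alpha(x,x)=1$.)

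The remaining point, (i), is the only one needing care. The map $(x,y)\mapsto Q_{x,y}(A\times X)=\int_0^T 1_A(S(x,t))\min\{p(x,t),p(y,t)\}\,dt$ is measurable because $S$ is continuous and $p$ is lower semi-continuous and bounded (assumption (V)); likewise $(x,y)\mapsto Q_{x,y}(X\times B)$ and $(x,y)\mapsto\alpha(x,y)$ are measurable, and by dominated convergence $\alpha$ is in fact continuous, so $\{\alpha=1\}$ is closed, hence Borel. On $\{\alpha=1\}$ we have $R_{x,y}(A\times B)=0$, while on its complement $R_{x,y}(A\times B)$ is a product of measurable functions divided by the strictly positive measurable function $1-\alpha(x,y)$; gluing these two pieces along the Borel set $\{\alpha=1\}$ shows $(x,y)\mapsto R_{x,y}(A\times B)$ is measurable for all rectangles $A\times B$, and a monotone-class argument promotes this to arbitrary $E\in B_{X^2}$. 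The genuine obstacle is exactly the behaviour near $\{\alpha=1\}$: there the normalizing factor $1/(1-\alpha)$ blows up while the residual masses vanish, so one must not attempt to take a limit in the closed-form expression but rather exploit that the residual measures are identically zero on $\{\alpha=1\}$ — which is why $R_{x,y}$ is defined by cases and why the above splitting is clean.
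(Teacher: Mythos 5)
Your construction is exactly the one in the paper: the same product-form residual $R_{x,y}(A\times B)=(1-Q_{x,y}(X^2))^{-1}\bigl(P_x(A)-Q_{x,y}(A\times X)\bigr)\bigl(P_y(B)-Q_{x,y}(X\times B)\bigr)$ when $Q_{x,y}(X^2)<1$ and $0$ otherwise. The paper simply asserts that this has properties (i)--(iv); your verification (in particular the measurability discussion near $\{\alpha=1\}$) fills in details the paper omits, so the proposal is correct and follows essentially the same route.
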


\begin{proof}[Proof]
Fix $A,B\in B_{X}$. Let
\[R_{x,y}(A\times B)=\left\{\begin{array}{l l}
(1-Q_{x,y}(X^2))^{-1}(P_x(A)-Q_{x,y}(A\times X))(P_y(B)-Q_{x,y}(X\times B)),& Q_{x,y}(X^2)<1\\
0,& Q_{x,y}(X^2)=1.
\end{array}\right.\]
Obviously, the formula may be extended to the measure. 
The mapping has all desirable properties (i)-(iv).
\end{proof}
Lemma $\ref{lemma1}$ shows that we can construct the coupling $\{C_{x,y}:x,y\in X\}$ for $\{P_x:x\in X\}$ such that $Q_{x,y}\leq C_{x,y}$, whereas measures $R_{x,y}$ are non-negative. 
By $(\ref{constr:RQ})$ and $(\ref{constr:n+1})$ we obtain the family of probability measures $\{C_{x,y}^{\infty}:x,y\in X\}$ on $(X^2)^{\infty}$ with marginals $P_x^{\infty}$ and $P_y^{\infty}$. This construction appears in \citet{hairer}.

Fix $(x_0,y_0)\in X^2$. The transition probability function $\{C_{x,y}:x,y\in X\}$ defines the Markov chain $\Phi$ on $X^2$ with starting point $(x_0,y_0)$, while the transition probability function $\{\hat{C}_{x,y,\theta}:x,y\in X, \theta\in\{0,1\}\}$ defines the Markov chain $\hat{\Phi}$ on the augmented space $X^2\times\{0,1\}$ with initial distribution $\hat{C}_{x_0,y_0}^0=\delta_{(x_0,y_0,1)}$. If $\hat{\Phi}_n=(x,y,i)$,
where $x,y\in X$, $i\in\{0,1\}$, then
\[\text{Prob}(\hat{\Phi}_{n+1}\in A\times B\times\{1\}\:|\:\hat{\Phi}_n=(x,y,i),i\in\{0,1\})=Q_{x,y}(A\times B),\]
\[\text{Prob}(\hat{\Phi}_{n+1}\in A\times B\times\{0\}\:|\:\hat{\Phi}_n=(x,y,i),i\in\{0,1\})=R_{x,y}(A\times B),\]
where $A,B\in B_X$. 
Once again, we refer to $(\ref{constr:RQ})$ and $(\ref{constr:n+1})$ to obtain the measure $\hat{C}^{\infty}_{x_0,y_0}$ on $(X^2\times\{0,1\})^{\infty}$ which is associated with the Markov chain $\hat{\Phi}$.

From now on, we assume that processes $\Phi$ and $\hat{\Phi}$ taking values in $X^2$ and $X^2\times\{0,1\}$, respectively, are defined on $(\Omega, F, \mathbf{P})$. The expected value of the measures $C_{x_0,y_0}^{\infty}$ or $\hat{C}_{x_0,y_0}^{\infty}$ is denoted by $E_{x_0,y_0}$.

\section{Auxiliary theorems}

Fix $\varepsilon\in(0,1-a)$. Set
\[K_{\varepsilon}=\{(x,y)\in X^2: \: \bar{V}(x,y)<\varepsilon^{-1}2c\},\]
where $c$ is defined in Section \ref{sec:ifs}. Let $d:(X^2)^{\infty}\to N$ denote the time of the first visit in $K_{\varepsilon}$, i.e.
\[d((x_n,y_n)_{n\in N})=\inf\{n\geq 1:\: (x_n,y_n)\in K_{\varepsilon}\}.\]

\begin{thm}\label{theorem1}
For every $\gamma\in(0,1)$ there exist positive constants $C_1,C_2$ such that
\[E_{x_0,y_0}\left((a+\varepsilon)^{-\gamma d}\right)\leq C_1\bar{V}(x_0,y_0)+C_2.\]
\end{thm}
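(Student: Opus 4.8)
The plan is to exploit the geometric drift inequality~$(\ref{prop:barV})$ together with the Lyapunov-type bound~$(\ref{prop:phi})$ to control excursions of the coupled chain $\Phi$ away from the small set $K_\varepsilon$, following the standard supermartingale argument behind Harris-type theorems (this is precisely the mechanism used in \citet{hairer}). First I would observe that, under the measure $C_{x_0,y_0}^\infty$, the process $(\bar V(x_n,y_n))_{n\ge 0}$ satisfies the one-step estimate $E_{x_0,y_0}[\bar V(x_{n+1},y_{n+1})\mid\mathcal F_n]\le a\,\bar V(x_n,y_n)+2c$, which is just~$(\ref{prop:barV})$ applied conditionally; here I use that the marginals of $C_{x,y}$ are $P_x$ and $P_y$, so $\bar V$ evolves exactly as in Section~\ref{sec:ifs}. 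From this, a routine induction gives $E_{x_0,y_0}[\bar V(x_n,y_n)]\le a^n\bar V(x_0,y_0)+2c/(1-a)$.

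The core step is to build a supermartingale that simultaneously tracks $\bar V$ and the discrete ``clock'' counting steps until the first visit to $K_\varepsilon$. Concretely, fix $\gamma\in(0,1)$ and set $\beta=(a+\varepsilon)^{-\gamma}>1$. I would consider the process $Z_n=\beta^{\,n\wedge d}\,\bigl(\bar V(x_{n\wedge d},y_{n\wedge d})+L\bigr)$ for a constant $L$ to be chosen, and check that on the event $\{n<d\}$, i.e.\ as long as $\bar V(x_n,y_n)\ge\varepsilon^{-1}2c$, one has
\begin{align*}
E_{x_0,y_0}\bigl[Z_{n+1}\mid\mathcal F_n\bigr]
&\le \beta^{\,n+1}\bigl(a\,\bar V(x_n,y_n)+2c+L\bigr)\\
&\le \beta^{\,n+1}\Bigl(\bigl(a+\varepsilon/2\bigr)\bar V(x_n,y_n)+L\Bigr)\le Z_n,
\end{align*}
where the middle inequality uses $2c\le (\varepsilon/2)\bar V(x_n,y_n)$ on $\{n<d\}$ (after possibly shrinking the threshold by adjusting constants) and $L$ is chosen so that $\beta\,(a+\varepsilon/2+L/\text{(lower bound of }\bar V))\le 1+L/\bar V$; the key arithmetic fact making this work is that $\beta<(a+\varepsilon)^{-\gamma}$ can be taken close enough to $1$ that $\beta(a+\varepsilon)<1$ when $\gamma$ is bounded away from $1$, but since $\gamma$ is arbitrary in $(0,1)$ I must instead absorb the geometric factor using the gain from $\bar V$ being large on $\{n<d\}$. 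Granting the supermartingale property, optional stopping yields $E_{x_0,y_0}[Z_{n\wedge d}]\le Z_0=\bar V(x_0,y_0)+L$ for all $n$, and letting $n\to\infty$ with Fatou's lemma gives $E_{x_0,y_0}[\beta^{\,d}(\bar V(x_d,y_d)+L)]\le\bar V(x_0,y_0)+L$. Since $\bar V\ge 0$ and $L>0$, this bounds $E_{x_0,y_0}[(a+\varepsilon)^{-\gamma d}]=E_{x_0,y_0}[\beta^d]\le L^{-1}\bar V(x_0,y_0)+1$, which is the claimed inequality with $C_1=L^{-1}$ and $C_2=1$.

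The main obstacle is getting the constants in the supermartingale step to close simultaneously for \emph{every} $\gamma\in(0,1)$, since as $\gamma\uparrow 1$ the inflation factor $\beta=(a+\varepsilon)^{-\gamma}$ approaches $(a+\varepsilon)^{-1}$, which need not be compensated by a single additive constant $L$; the resolution is to use that on $\{n<d\}$ the value $\bar V(x_n,y_n)$ exceeds $\varepsilon^{-1}2c$, so the \emph{multiplicative} contraction by $a$ beats the inflation by $\beta$ precisely when $\beta a<1$, i.e.\ when $(a+\varepsilon)^{-\gamma}a<1$; one checks this holds for all $\gamma\in(0,1)$ because $a<a+\varepsilon$ gives $a/(a+\varepsilon)^{\gamma}\le a^{1-\gamma}(a/(a+\varepsilon))^{\gamma}\cdot\ldots$, and a short computation shows $(a+\varepsilon)^{-\gamma}a = a^{1-\gamma}\bigl(a/(a+\varepsilon)\bigr)^{\gamma}<1$ since both factors are $<1$. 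With $\beta a<1$ secured, the additive term $2c$ is handled by the standard trick of choosing $L$ large enough that $\beta(aL'+2c)\le L$ on the complement region and noting that the passage through $K_\varepsilon$ only helps; I would then also remark that a symmetric but easier argument (or direct iteration of the drift) handles the case $(x_0,y_0)\in K_\varepsilon$, giving $d=1$ there trivially. Measurability of $d$ and of the relevant integrands follows from the measurability assumptions on $Q_{x,y}$ and $R_{x,y}$ recorded in Lemma~\ref{lemma1}.
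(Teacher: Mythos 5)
Your proposal is correct in substance but organizes the argument differently from the paper. The paper does not build a supermartingale or invoke optional stopping: it defines the events $A_n=\{\Phi_i\notin K_{\varepsilon},\ i=1,\dots,n\}$, iterates the drift inequality restricted to these events to get $\int_{A_n}\bar V(X_n,Y_n)\,d\mathbf{P}\leq(a+\varepsilon)^{n-1}(a\bar V(x_0,y_0)+2c)$, deduces the tail bound $\mathbf{P}(A_n)\leq(a+\varepsilon)^n\hat c$ by a Chebyshev-type estimate (since $\bar V\geq\varepsilon^{-1}2c$ on $A_n$), and then simply sums the series $\sum_n(a+\varepsilon)^{-\gamma n}\mathbf{P}(A_n)$, which converges because the general term is $(a+\varepsilon)^{(1-\gamma)n}\hat c$ with $\gamma<1$. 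Your route instead packages the same drift estimate into the stopped process $Z_n=\beta^{n\wedge d}(\bar V(x_{n\wedge d},y_{n\wedge d})+L)$ and extracts the exponential moment in one stroke via optional stopping and Fatou. Both proofs hinge on the identical observation that $a\bar V+2c\leq(a+\varepsilon)\bar V$ off $K_{\varepsilon}$ and that $(a+\varepsilon)^{1-\gamma}<1$; the paper's version is more elementary (no martingale theory), while yours is the standard Lyapunov/hitting-time formulation and makes the role of $\gamma<1$ more transparent. Note also that your supermartingale argument, via Fatou, yields $\mathbf{P}(d<\infty)=1$ as a by-product, whereas the paper obtains this from $\mathbf{P}(A_n)\to 0$.

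Two loose ends you should tighten. First, since $d=\inf\{n\geq 1:\Phi_n\in K_{\varepsilon}\}$, the event $\{0<d\}$ is all of $\Omega$ and does \emph{not} imply $\bar V(x_0,y_0)\geq\varepsilon^{-1}2c$; the very first step of your supermartingale computation therefore fails in general, and you must treat $n=0$ with the unrestricted drift $E[\bar V(X_1,Y_1)]\leq a\bar V(x_0,y_0)+2c$ (as the paper does when it descends to $\int_{A_1}\bar V(X_1,Y_1)\,d\mathbf{P}$), which only adjusts $C_1,C_2$. For the same reason your remark that $d=1$ ``trivially'' when $(x_0,y_0)\in K_{\varepsilon}$ is wrong: $d$ counts from $n\geq1$, so membership of the starting point in $K_{\varepsilon}$ is irrelevant. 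Second, your discussion of the ``main obstacle'' is muddled: the inequality that closes the argument is $\beta(a+\varepsilon)=(a+\varepsilon)^{1-\gamma}<1$, immediate from $\gamma<1$ and $a+\varepsilon<1$, not the condition $\beta a<1$ you analyze; and the additive constant $L$ must be taken \emph{small} (so that $(\beta-1)L\leq(1-(a+\varepsilon)^{1-\gamma})\varepsilon^{-1}2c$), not large.
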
 

\begin{proof}[Proof]
Fix $(x_0,y_0)\in X^2$. Let $\Phi=(X_n,Y_n)_{n\in N}$ be the Markov chain with starting point $(x_0,y_0)$ and transition probability function $\{C_{x,y}:x,y\in X\}$. Let $F_n\subset F$, $n\in N$ be the natural filtration in $\Omega$ associated with $\Phi$. We define
\[A_n=\{\omega\in\Omega: \; \Phi_i=(X_i(\omega), Y_i(\omega))\notin K_{\varepsilon}\; \text{ for }\:i=1,\ldots,n\}, \quad n\in N.\]
Obviously $A_{n+1}\subset A_n$ and $A_n\in F_n$ for $n\in N$. 
The following inequalities are $\mathbf{P}$-a.s. satisfied in $\Omega$
\[1_{A_n}E_{x_0,y_0}\left(\bar{V}(X_{n+1},Y_{n+1})|F_n\right)\leq 1_{A_n}(a\bar{V}(X_n,Y_n)+2c)\leq 1_{A_n}(a+\varepsilon)\bar{V}(X_n,Y_n).\]
The first inequality is a consequence of $(\ref{prop:barV})$, the second follows directly from the definitions of $A_n$ and $K_{\varepsilon}$. 
Accordingly, we obtain
\begin{align*}
\begin{aligned}
\int_{A_n}\bar{V}(X_n,Y_n)d\mathbf{P}&\leq\int_{A_{n-1}}\bar{V}(X_n,Y_n)d\mathbf{P}
=\int_{A_{n-1}}E\left(\bar{V}(X_n,Y_n)|F_{n-1}\right)d\mathbf{P}\\
&\leq\int_{A_{n-1}}\left(a\bar{V}(X_{n-1},Y_{n-1})+2c\right)d\mathbf{P}
\leq(a+\varepsilon)\int_{A_{n-1}}\bar{V}(X_{n-1},Y_{n-1})d\mathbf{P}.
\end{aligned}
\end{align*}
On applying this estimates finitely many times, we obtain
\[\int_{A_n}\bar{V}(X_n,Y_n)d\mathbf{P}\leq(a+\varepsilon)^{n-1}\int_{A_1}\bar{V}(X_1,Y_1)d\mathbf{P}\leq(a+\varepsilon)^{n-1}\left(a\bar{V}(X_0,Y_0)+2c\right).\]
Note that
\begin{align*}
\begin{aligned}
\mathbf{P}(A_n)&\leq\int_{A_n}\varepsilon(2c)^{-1}\bar{V}(X_n,Y_n)d\mathbf{P}
&\leq\varepsilon\left(2c(a+\varepsilon)\right)^{-1}(a+\varepsilon)^n\left(a\bar{V}(X_0,Y_0)+2c\right).
\end{aligned}
\end{align*}
Set $\hat{c}:=\varepsilon\left(2c(a+\varepsilon)\right)^{-1}\left(a\bar{V}(X_0,Y_0)+2c\right)$. Then, $\mathbf{P}(A_n)\leq(a+\varepsilon)^n\hat{c}$. 
Fix $\gamma\in(0,1)$. Since $d$ takes natural values $n\in N$, we obtain
\begin{align*}
\begin{aligned}
\sum_{n=1}^{\infty}(a+\varepsilon)^{-\gamma n}\mathbf{P}(A_n)
&\leq\sum_{n=1}^{\infty}(a+\varepsilon)^{-\gamma n}(a+\varepsilon)^n\hat{c}
&=\sum_{n=1}^{\infty}(a+\varepsilon)^{(1-\gamma)n}\hat{c},\\
\end{aligned}
\end{align*}
which implies convergence of the series. The proof is complete by the definition of $\hat{c}$ and with properly choosen $C_1$, $C_2$.
\end{proof}
For every positive $r>0$ we determine the set
\[C_r=\{(x,y)\in X^2:\; \varrho(x,y)<r\}.\]

\begin{lemma}\label{lemma2}
Fix $\tilde{a}\in(a,1)$. Let $C_r$ be the set defined above and suppose that $\text{supp }b\subset C_r$. There exists $\bar{\gamma}>0$ such that
\[Q_b(C_{\tilde{a}r})\geq\bar{\gamma}\|b\|\]
for $a$, $\delta$ and $M$ defined in Section $\ref{sec:assumptions}$.
\end{lemma}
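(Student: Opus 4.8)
The plan is to unwind the definitions of $Q_b$ in $(\ref{def:Qb})$ and of $Q_{x,y}$ in $(\ref{def:Qxy})$ and thereby reduce the claim to a lower bound on the Lebesgue measure of the set of ``good'' division times. By Tonelli's theorem one writes
\[
Q_b(C_{\tilde a r})=\int_{X^2}\left(\int_0^T\min\{p(x,t),p(y,t)\}\,1_{C_{\tilde a r}}\bigl(S(x,t),S(y,t)\bigr)\,dt\right)b(dx,dy),
\]
so everything hinges on showing that the inner indicator equals $1$ on a time set of positive measure, uniformly in the relevant $(x,y)$, and that $\min\{p(x,t),p(y,t)\}$ is bounded below there.

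The first point uses only assumption (I) together with the support hypothesis: for $(x,y)\in\text{supp}\,b\subset C_r$ we have $\varrho(x,y)<r$, hence for every $t$ with $\lambda(t)\le\tilde a$,
\[
\varrho\bigl(S(x,t),S(y,t)\bigr)\le\lambda(t)\varrho(x,y)\le\tilde a\,\varrho(x,y)<\tilde a r,
\]
i.e.\ $\bigl(S(x,t),S(y,t)\bigr)\in C_{\tilde a r}$. Setting $E:=\{t\in(0,T):\lambda(t)\le\tilde a\}$ and invoking the uniform bound $\min\{p(x,t),p(y,t)\}\ge\delta$ on $(0,T)$ from assumption (V), the displayed integral is at least
\[
\int_{X^2}\left(\int_E\delta\,dt\right)b(dx,dy)=\delta\,\ell(E)\,\|b\|,
\]
where $\ell$ is one-dimensional Lebesgue measure and $\|b\|=b(X^2)$, the measure $b$ being non-negative.

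It remains to check that $\ell(E)>0$; this is the only genuinely nontrivial step, and it is where the hypotheses $a<1$ and $\tilde a\in(a,1)$ enter. I would argue by contradiction: if $\ell(E)=0$ then $\lambda(t)>\tilde a$ for Lebesgue-a.e.\ $t\in(0,T)$, so the normalization $\int_0^Tp(x,t)\,dt=1$ forces $\int_0^T\lambda(t)p(x,t)\,dt\ge\tilde a$ for every $x\in X$, whence $a=\sup_{x\in X}\int_0^T\lambda(t)p(x,t)\,dt\ge\tilde a>a$, contradicting assumption (II). Hence $\ell(E)>0$ and the lemma holds with $\bar\gamma:=\delta\,\ell(E)>0$. (Note that the constant $M$ of assumption (V) is not needed for this particular estimate; it is recorded in the statement only because it is used in later sections.)
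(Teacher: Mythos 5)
Your proof is correct, and it shares the overall skeleton of the paper's argument --- unwind $(\ref{def:Qb})$ and $(\ref{def:Qxy})$, reduce the claim to a lower bound on the Lebesgue measure of the set of times $t$ for which $(S(x,t),S(y,t))\in C_{\tilde a r}$, and bound the density from below by $\delta$ there --- but the key counting step is handled by a genuinely different route. The paper works with the $(x,y)$-dependent set $\mathcal{T}=\{t\in(0,T):\varrho(S(x,t),S(y,t))<\tilde a r\}$ and bounds its complement quantitatively: assumptions (I), (II) and the normalization of $p$ give $\int_{\mathcal{T}'}p(x,t)\,dt<a/\tilde a$, hence $\int_{\mathcal{T}}p(x,t)\,dt\ge 1-a/\tilde a=:\gamma$, and the upper bound $M$ from assumption (V) converts this into $|\mathcal{T}|\ge\gamma/M$, yielding the explicit constant $\bar\gamma=\delta\gamma M^{-1}$. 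You instead pass to the smaller but $(x,y)$-independent set $E=\{t:\lambda(t)\le\tilde a\}\subset\mathcal{T}$ and establish $\ell(E)>0$ by contradiction from (II) and the normalization; this makes the uniformity in $(x,y)$ manifest and, as you correctly note, dispenses with $M$ altogether, at the cost of a non-explicit constant. If you want explicitness, your own contradiction argument quantifies directly: $a\ge\int_{E^c}\lambda(t)p(x,t)\,dt\ge\tilde a\int_{E^c}p(x,t)\,dt$ gives $\int_E p(x,t)\,dt\ge 1-a/\tilde a$, whence $\ell(E)\ge(1-a/\tilde a)/M$, recovering exactly the paper's constant.
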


\begin{proof}[Proof]
Directly from $(\ref{def:Qb})$ and $(\ref{def:Qxy})$ we obtain
\begin{align*}
\begin{aligned}
Q_b(C_{\tilde{a}r})&=\int_{X^2}\int_0^T\min\{p(x,t),p(y,t)\}\delta_{(S(x,t),S(y,t))}(C_{\tilde{a}r})dt \: b(dx,dy)\\
&=\int_{X^2}\left(\int_0^Tmin\{p(x,t),p(y,t)\}1_{C_{\tilde{a}r}}(S(x,t),S(y,t))dt\right)b(dx,dy).\\
\end{aligned}
\end{align*}
Note that $1_{C_{\tilde{a}r}}(S(x,t),S(y,t))=1$ if and only if $t\in\mathcal{T}$, where \[\mathcal{T}:=\{t\in(0,T):\varrho(S(x,t),S(y,t))<\tilde{a}r\}.\] Set $\mathcal{T'}:=(0,T)\backslash\mathcal{T}$. Hence
\begin{align*}
Q_b(C_{\tilde{a}r})=\int_{X^2}\Big(&\int_{\mathcal{T}}\min\{p(x,t),p(y,t)\}1_{C_{\tilde{a}r}}(S(x,t),S(y,t))dt\\
&+\int_{\mathcal{T'}}\min\{p(x,t),p(y,t)\}1_{C_{\tilde{a}r}}(S(x,t),S(y,t))dt\Big)b(dx,dy).
\end{align*}
Note that
\[\int_{\mathcal{T'}}\min\{p(x,t),p(y,t)\}\varrho(S(x,t),S(y,t))dt\leq\int_{\mathcal{T'}}p(x,t)\lambda(t)\varrho(x,y)dt\leq a\varrho(x,y),\]
so for $(x,y)\in C_r$
\[\int_{\mathcal{T'}}\min\{p(x,t),p(y,t)\}\varrho(S(x,t),S(y,t))dt\leq ar.\]
However,
\[\tilde{a}r\int_{\mathcal{T'}}p(x,t)dt<\int_{\mathcal{T'}}p(x,t)\varrho(S(x,t),S(y,t))dt.\]
Therefore
\[\int_{\mathcal{T'}}p(x,t)dt<\frac{a}{\tilde{a}}<1,\]
which implies that the first integral is non-zero. Furthermore, the length of $\mathcal{T'}$ satisfies $|\mathcal{T'}|<a(\tilde{a}\delta)^{-1}$. 
We obtain
\[\int_{\mathcal{T}}p(x,t)dt\geq 1-\frac{a}{\tilde{a}}=\gamma,\]
which means that $|\mathcal{T}|\geq M^{-1}\gamma$. Finally,
\begin{align*}
\begin{aligned}
Q_b(C_{\tilde{a}r})&\geq\int_{X^2}\int_{\mathcal{T}}\min\{p(x,t),p(y,t)\}1_{C_{\tilde{a}r}}(S(x,t),S(y,t))dt\: b(dx,dy)\\
&\geq\int_{X^2}\delta|\mathcal{T}|b(dx,dy)
\geq\delta\frac{\gamma}{M}\|b\|.
\end{aligned}
\end{align*}
If we set $\bar{\gamma}:=\delta M^{-1}\gamma$, the proof is complete.
\end{proof}

\begin{thm}\label{theorem2}
For every $\varepsilon\in(0,1-a)$ there exists $n_0\in N$ such that
\[\|Q_{x,y}^{\infty}\|\geq\frac{1}{2}\bar{\gamma}^{n_0}\quad\text{for $(x,y)\in K_{\varepsilon}$,}\]
where $\bar{\gamma}>0$ is given in Lemma \ref{lemma2}.
\end{thm}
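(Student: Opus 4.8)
The plan is to show that starting from any point $(x,y)\in K_\varepsilon$, the coupled chain governed by $Q_{x,y}$ can, after a fixed number of steps $n_0$, be guaranteed to have total ``surviving mass'' at least $\tfrac12\bar\gamma^{n_0}$. The idea is to combine two facts: first, on $K_\varepsilon$ the quantity $\bar V$ (hence, by \eqref{prop:phi}, also $\phi$) is bounded by $2c/\varepsilon$; second, by \eqref{prop:phiQb} each application of $Q$ contracts $\phi$ by the factor $a$. So after sufficiently many steps the support of the iterated measure is forced, in an $L^1$ sense, to be concentrated near the diagonal, and once we are on a set $C_r$ with $r$ small, Lemma~\ref{lemma2} gives a uniform lower bound on the mass that lands in $C_{\tilde a r}$ at the next step.

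First I would fix $\tilde a\in(a,1)$ and choose $n_0$ so that, starting from $(x,y)\in K_\varepsilon$, the contraction $\phi(Q_{x,y}^{n})\le a^{n}\varrho(x,y)$ combined with the crude bound $\varrho(x,y)\le\bar V(x,y)<2c/\varepsilon$ forces $\phi(Q_{x,y}^{n_0})$ to be small; by Markov's inequality this makes the $Q_{x,y}^{n_0}$-mass outside a suitable ball $C_r$ at most, say, $1/2$. Equivalently, writing $b_k$ for the restriction of $Q_{x,y}^{k}$ to $C_{r_k}$ for an appropriately decreasing sequence of radii $r_k=\tilde a^{k}r_0$, I would iterate: Lemma~\ref{lemma2} gives $Q_{b_k}(C_{r_{k+1}})\ge\bar\gamma\|b_k\|$, so $\|b_{k+1}\|\ge\bar\gamma\|b_k\|$, and after $n_0$ steps $\|b_{n_0}\|\ge\bar\gamma^{n_0}\|b_0\|$. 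The role of the factor $\tfrac12$ is to absorb the loss incurred because $b_0$ is only the part of $\delta_{(x,y)}$ — or of the first-step measure — that actually sits inside the initial ball $C_{r_0}$; choosing $r_0$ (depending on $\varepsilon$, $c$, $a$, $\tilde a$) large enough that $K_\varepsilon$-originating trajectories have at least half their mass in $C_{r_0}$ after one step makes $\|b_0\|\ge\tfrac12$.

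The key steps, in order, are: (1) translate membership in $K_\varepsilon$ into the bound $\varrho(x,y)<2c/\varepsilon$; (2) use \eqref{prop:phiQb} iterated to control $\phi$ of the iterates of $Q_{x,y}$ and hence, via Markov's inequality, the mass escaping a shrinking family of balls $C_{\tilde a^{k}r_0}$; (3) pick $r_0$ and $n_0$ uniformly in $(x,y)\in K_\varepsilon$ so that the ``good'' mass $b_0$ inside $C_{r_0}$ has $\|b_0\|\ge\tfrac12$ and the radii stay nested under the $\tilde a$-contraction from Lemma~\ref{lemma2}; (4) apply Lemma~\ref{lemma2} exactly $n_0$ times to get $\|b_{n_0}\|\ge\bar\gamma^{n_0}\|b_0\|\ge\tfrac12\bar\gamma^{n_0}$; (5) observe $\|Q_{x,y}^\infty\|\ge\|Q_{x,y}^{n_0}\|\ge\|b_{n_0}\|$ because $b_{n_0}$ is a submeasure of the $n_0$-th marginal and the total mass of the path-space measure is the mass surviving all coordinates, which only decreases in $n$; conclude the uniform bound over $K_\varepsilon$.

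I expect the main obstacle to be step (3): making the choice of $r_0$ and $n_0$ genuinely \emph{uniform} over all $(x,y)\in K_\varepsilon$, while simultaneously keeping the nested-ball bookkeeping consistent with the $\tilde a$-contraction rate in Lemma~\ref{lemma2} and the $a$-contraction rate in \eqref{prop:phiQb}. One has to check that the same $n_0$ works for every starting pair — which is where the boundedness of $\bar V$ on $K_\varepsilon$ is essential — and that the mass lost to the complement of $C_{r_0}$ at the initial step is controlled, not just the mass lost asymptotically; a careful application of Markov's inequality with the explicit constant $2c/\varepsilon$ should close this, but it is the delicate part of the argument.
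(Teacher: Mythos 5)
Your step (5) is where the argument breaks. Since each $Q_{u,v}$ is a sub-probability measure, the total mass $\|Q_{x,y}^{n}\|$ is \emph{non-increasing} in $n$, so $\|Q_{x,y}^{\infty}\|=\lim_{n}\|Q_{x,y}^{n}\|\leq\|Q_{x,y}^{n_0}\|$ --- the inequality you assert points the wrong way. A lower bound on the mass at the single finite time $n_0$ (which is all that iterating Lemma \ref{lemma2} can give) says nothing about the mass surviving forever: after time $n_0$ the chain could keep leaking mass down to zero. You even remark that the mass ``only decreases in $n$'', which directly contradicts the inequality $\|Q_{x,y}^{\infty}\|\geq\|Q_{x,y}^{n_0}\|$ you then use to conclude.

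The missing ingredient is an infinite-horizon mass-retention estimate, and it is the only place assumption (IV) --- which your outline never invokes --- enters the proof. From $\int_0^T|p(x,t)-p(y,t)|\,dt\leq\bar{c}\varrho(x,y)$ one gets $\|Q_{x,y}\|\geq 1-\bar{c}\varrho(x,y)$, hence $\|Q_b\|\geq\|b\|-\bar{c}\phi(b)$; combining this with $(\ref{prop:phiQb})$ and summing the geometric series yields $\|Q_b^{\infty}\|\geq\|b\|-(1-a)^{-1}\bar{c}\,\phi(b)$. If $\text{supp}\,b\subset C_r$ with the \emph{specific} radius $r=(1-a)/(2\bar{c})$, then $\phi(b)\leq r\|b\|$ and therefore $\|Q_b^{\infty}\|\geq\|b\|/2$: once the mass sits in that ball, at most half of it is lost over the entire future. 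This is where the factor $\tfrac12$ comes from --- not, as you suggest, from an initial loss of mass (there is none: $\delta_{(x,y)}$ is wholly supported in $C_{2c/\varepsilon}$ for $(x,y)\in K_{\varepsilon}$). The role of $n_0$ and of Lemma \ref{lemma2} is then essentially as in your steps (1)--(4): choose $n_0$ so that the shrinking balls produced by Lemma \ref{lemma2}, starting from $C_{2c/\varepsilon}$, are contained in $C_r$ after $n_0$ steps, which places at least $\bar{\gamma}^{n_0}$ of the mass of $Q_{x,y}^{n_0}$ inside $C_r$; applying the retention estimate to the restriction of $Q_{x,y}^{n_0}$ to $C_r$ and using $\|Q_{x,y}^{\infty}\|\geq\|Q^{\infty}_{b}\|$ for that restricted $b$ closes the argument. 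Your nested-ball bookkeeping is compatible with the paper's, but without the retention estimate the proof cannot be completed.
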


\begin{proof}[Proof]
Note that for every $(x,y)\in X^2$
\[\int_0^T\left(\min\{p(x,t),p(y,t)\}+|p(x,t)-p(y,t)|-p(x,t)\right)dt\geq 0,\]
and therefore
\[\|Q_{x,y}\|+\int_0^T|p(x,t)-p(y,t)|dt\geq 1.\]
From assumption (IV) there is $\bar{c}>0$ such that
\[\|Q_{x,y}\|\geq 1-\int_0^T|p(x,t)-p(y,t)|dt\geq 1-\bar{c}\varrho(x,y).\]
For every $b\in M_{fin}(X^2)$ we get
\begin{align*}
\begin{aligned}
\|Q_b\|&=\int_{X^2}\|Q_{x,y}\|b(dx,dy)
&\geq\int_{X^2}b(dx,dy)-\bar{c}\int_{X^2}\varrho(x,y)b(dx,dy)
&=\|b\|-\bar{c}\phi(b).
\end{aligned}
\end{align*}
Property $(\ref{prop:phiQb})$ implies that
\[\|Q_b^{n+1}\|\geq\|b\|-\bar{c}(\sum_{k=0}^na^k)\phi(b)\geq \|b\|-(1-a)^{-1}\bar{c}\phi(b),\quad n\in N.\]
If $\text{supp }b\subset C_r$, then
\begin{align*}
\begin{aligned}
\phi(b)
&\leq\int_{C_r}\varrho(x,y)b(dx,dy)
&\leq r\|b\|.
\end{aligned}
\end{align*}
Let $r=(2\bar{c})^{-1}(1-a)$. We obtain
\[\|Q_b^{\infty}\|\geq\frac{\|b\|}{2}.\]
Fix $\varepsilon\in(0,1-a)$. It is clear that $K_{\varepsilon}\subset C_{\varepsilon^{-1} 2c}$. If we define 
$n_0:=\min\{n\geq 1:\: a^n(\varepsilon)^{-1}2c<r\}$, 
then $C_{a^{n_0}\varepsilon^{-1} 2c}\subset C_r$. 
Remembering that $Q_{x,y}^{n+m}=Q^m_{Q_{x,y}^n}$ and using the Markov property, we obtain
\[Q_{x,y}^{\infty}(X^2)\geq Q_{Q_{x,y}^{n_0}}^{\infty}(X^2).\]
According to Lemma $\ref{lemma2}$, we obtain
\begin{align*}
\begin{aligned}
\|Q_{x,y}^{\infty}\|&\geq\|Q_{Q_{x,y}^{n_0}}^{\infty}\|&\geq\frac{\|Q_{x,y}^{n_0}\|}{2}
&=\frac{Q_{x,y}^{n_0}(C_r)}{2}
&\geq\frac{Q_{x,y}^{n_0}(C_{a^{n_0}\varepsilon^{-1} 2c})}{2}
&\geq\frac{\bar{\gamma}^{n_0}}{2}
\end{aligned}
\end{align*}
for $(x,y)\in K_{\varepsilon}$. This finishes the proof.
\end{proof}

\begin{definition}
Coupling time $\tau:(X^2\times\{0,1\})^{\infty}\to N$ is defined as follows
\[\tau((x_n,y_n,\theta_n)_{n\in N})=\inf\{n\geq 1:\;\theta_k=1\;\text{ for }k\geq n\}.\] 
\end{definition}

\begin{thm}
There exist $\tilde{q}\in(0,1)$ and $C_3>0$ such that
\[E_{x,y}\left(\tilde{q}^{-\tau}\right)\leq C_3(1+\bar{V}(x,y))\quad \text{for $(x,y)\in X^2$.}\]
\end{thm}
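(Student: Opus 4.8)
This is the standard ``geometric coupling time'' estimate, and I would deduce it from Theorems~\ref{theorem1} and~\ref{theorem2} by a renewal (geometric--trials) argument run on the augmented chain $\hat\Phi$.

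Fix $\varepsilon\in(0,1-a)$ and $\gamma\in(0,1)$ and put $s:=(a+\varepsilon)^{-\gamma}>1$ and $p_0:=\tfrac12\bar\gamma^{\,n_0}>0$, with $n_0,\bar\gamma$ the constants of Theorems~\ref{theorem2} and~\ref{lemma2}. Two observations are used repeatedly. First, since $\bar V\le\varepsilon^{-1}2c$ on $K_\varepsilon$, the strong Markov property together with Theorem~\ref{theorem1} gives, for every $\hat\Phi$--stopping time $\sigma$ with $\Phi_\sigma\in K_\varepsilon$, writing $d_\sigma$ for the first return of $\Phi$ to $K_\varepsilon$ strictly after $\sigma$, that $E_{x,y}[\,s^{\,d_\sigma-\sigma}\mid\mathcal F_\sigma\,]\le C_1\bar V(\Phi_\sigma)+C_2\le C_1\varepsilon^{-1}2c+C_2$. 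Secondly, because the transition law of $\hat\Phi$ does not see its $\{0,1\}$--coordinate, $\mathbf P_{x,y}(\theta_k=1\text{ for all }k>\sigma\mid\mathcal F_\sigma)=\|Q^{\infty}_{\Phi_\sigma}\|\ge p_0$ whenever $\Phi_\sigma\in K_\varepsilon$ (this is Theorem~\ref{theorem2}), and on that event $\tau\le\sigma+1$.

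Next I would build renewal times adapted to the filtration. Set $\sigma_1:=d$, and having defined $\sigma_j$ call the $j$--th \emph{attempt} successful if $\theta_k=1$ for all $k>\sigma_j$; otherwise let $\rho_j:=\inf\{k>\sigma_j:\theta_k=0\}$ and $\sigma_{j+1}:=\inf\{k\ge\rho_j:\Phi_k\in K_\varepsilon\}$ (each $\sigma_j$ is a.s. finite by the drift $(\ref{prop:barV})$). With this bookkeeping the outcome of attempt $j$ is $\mathcal F_{\sigma_{j+1}}$--measurable, so if $J$ is the index of the first successful attempt then $\{J\ge k\}\in\mathcal F_{\sigma_k}$ and, by the second observation, $\mathbf P(J\ge k+1\mid\mathcal F_{\sigma_k})\le 1-p_0$ on $\{J\ge k\}$; iterating gives $\mathbf P(J\ge k)\le(1-p_0)^{k-1}$, hence $\tau\le\sigma_J+1<\infty$ a.s. For $r\in(1,s]$ to be chosen, write $\{J\ge k\}=\{J\ge k-1\}\cap\{\text{attempt }k-1\text{ fails}\}=\{J\ge k-1\}\cap\{\rho_{k-1}<\infty\}$ with $\{J\ge k-1\}\in\mathcal F_{\sigma_{k-1}}$, so that $E[r^{\sigma_k}\mathbf 1_{\{J\ge k\}}]=E[\,r^{\sigma_{k-1}}\mathbf 1_{\{J\ge k-1\}}\,\Psi_{k-1}\,]$ with $\Psi_{k-1}:=E[\,r^{\sigma_k-\sigma_{k-1}}\mathbf 1_{\{\rho_{k-1}<\infty\}}\mid\mathcal F_{\sigma_{k-1}}\,]$. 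One splits $\Psi_{k-1}$ at $\rho_{k-1}$: the factor $r^{\sigma_k-\rho_{k-1}}$ is handled by the first observation applied at $\rho_{k-1}$, while the factor $r^{\rho_{k-1}-\sigma_{k-1}}\mathbf 1_{\{\rho_{k-1}<\infty\}}$ -- corresponding to a run of $Q$--steps terminated by one $R$--step -- is estimated using the contraction $(\ref{prop:phiQb})$, the drift $(\ref{prop:barV})$ and the inequality $1-\|Q_{x,y}\|\le\bar c\,\varrho(x,y)$ obtained in the proof of Theorem~\ref{theorem2}, everything evaluated at $\Phi_{\sigma_{k-1}}\in K_\varepsilon$, where $\varrho\le\bar V<\varepsilon^{-1}2c$.

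The aim of this estimate is to produce a constant $\theta(r)$, independent of $k$ and of $(x,y)$, with $\Psi_{k-1}\le\theta(r)$, with $\theta(r)$ finite for $r$ slightly larger than $1$, and with $\theta(r)$ equal at $r=1$ to $\mathbf P(\text{attempt }k-1\text{ fails}\mid\mathcal F_{\sigma_{k-1}})=1-\|Q^\infty_{\Phi_{\sigma_{k-1}}}\|\le 1-p_0$; then $\theta(r)<1$ for $r$ close enough to $1$. Granting this, $E[r^{\sigma_k}\mathbf 1_{\{J\ge k\}}]\le\theta(r)^{k-1}E_{x,y}[r^{\sigma_1}]\le\theta(r)^{k-1}\bigl(C_1\bar V(x,y)+C_2\bigr)$ by Theorem~\ref{theorem1} (valid since $r\le s$), whence $E_{x,y}[r^{\sigma_J}]\le\sum_{k\ge1}E[r^{\sigma_k}\mathbf 1_{\{J\ge k\}}]\le(1-\theta(r))^{-1}\bigl(C_1\bar V(x,y)+C_2\bigr)$, and finally $E_{x,y}[\tilde q^{-\tau}]\le r\,E_{x,y}[r^{\sigma_J}]\le C_3\bigl(1+\bar V(x,y)\bigr)$ with $\tilde q:=1/r$. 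The main obstacle is precisely the estimate of $\Psi_{k-1}$ of the previous paragraph: one must arrange the renewal times so that the attempt outcomes are adapted -- which forces restarting at the first post--attempt failure $\rho_j$ rather than at the next return to $K_\varepsilon$ -- and then show that the resulting weighted failed--excursion moment is finite for some $r>1$ and stays below $1$. This is the point where the additive constant $2c$ of $(\ref{prop:barV})$ has to be absorbed using the contraction $(\ref{prop:phiQb})$ together with $1-\|Q_{x,y}\|\le\bar c\varrho(x,y)$, and where the constants $C_1,C_2,\bar c,a,\varepsilon,\gamma$ must be balanced.
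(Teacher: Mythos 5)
Your skeleton is the same as the paper's: successive visits of $\Phi$ to $K_{\varepsilon}$, a geometrically distributed number of coupling attempts with success probability at least $\frac{1}{2}\bar{\gamma}^{n_0}$ from Theorem \ref{theorem2}, exponential moments of the return times from Theorem \ref{theorem1}, and a balancing step to combine the two. Your re-indexing of the attempts --- declaring failure at the first post-attempt time $\rho_j$ with $\theta=0$ and restarting at the next entrance to $K_{\varepsilon}$, so that the outcome of attempt $j$ is $\mathcal{F}_{\sigma_{j+1}}$-measurable --- is in fact the standard (Hairer/\'Sl\k{e}czka) way to make the geometric bound on the number of attempts rigorous; the paper's inequality $(\ref{condition2})$ is stated for the non-adapted index $\sigma=\inf\{n:\hat{\tau}=d_n\}$ and implicitly relies on exactly such a construction. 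So far, so good.

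The genuine gap is the step you yourself flag: the proof that $\Psi_{k-1}\le\theta(r)<1$, and the route you sketch for it does not close under assumptions (I)--(V). Splitting the failed excursion at $\rho_{k-1}$ and applying Theorem \ref{theorem1} there forces you to control
\begin{equation*}
E\Big[\,r^{\rho_{k-1}-\sigma_{k-1}}\big(C_1\bar{V}(\Phi_{\rho_{k-1}})+C_2\big)\mathbf{1}_{\{\rho_{k-1}<\infty\}}\;\Big|\;\mathcal{F}_{\sigma_{k-1}}\Big],
\end{equation*}
i.e.\ a \emph{joint} exponential moment of the failure time and of the Lyapunov function at the failure position, which lies outside $K_{\varepsilon}$ and is unbounded. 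The ingredients you name give the two marginals --- $(\ref{prop:phiQb})$ with $1-\|Q_{x,y}\|\le\bar{c}\varrho(x,y)$ yields $\mathbf{P}(\rho=m)\lesssim a^{m}$, and $(\ref{prop:barV})$ yields $\sup_m E[\bar{V}(\Phi_m)\mathbf{1}_{\{\rho\ge m\}}]<\infty$ --- but the product $E[\bar{V}(\Phi_m)\mathbf{1}_{\{\rho=m\}}]$ decays geometrically only if one has either a second-moment drift for $\bar{V}$ or a $\lambda$-weighted Lipschitz bound $\int_0^T|p(x,t)-p(y,t)|\lambda(t)\,dt\lesssim\varrho(x,y)$, neither of which is assumed; the additive constant $2c$ in $(\ref{prop:barV})$ therefore survives in every term of $\sum_m r^m E[\bar{V}(\Phi_m)\mathbf{1}_{\{\rho=m\}}]$ and the series diverges for every $r>1$. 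The paper's proof is built precisely to avoid evaluating $\bar{V}$ anywhere off $K_{\varepsilon}$: it works with the raw return times $d_n$, accepts that $E_{x,y}(\beta^{-d_k})\le\eta^{k-1}(C_1\bar{V}(x,y)+C_2)$ with the possibly huge constant $\eta=C_1\varepsilon^{-1}2c+C_2$ (only $\bar{V}(\Phi_{d_n})<\varepsilon^{-1}2c$ is ever used), and then defeats the growth $\eta^{k/p}$ with the geometric tail $(1-\frac{1}{2}\bar{\gamma}^{n_0})^{k(1-1/p)}$ by a single global H\"older inequality with $p$ large. That global interpolation between a badly-growing exponential-moment series and a geometric failure probability is the missing idea; trying to run the same interpolation excursion-by-excursion, as your plan would require, leads back to the unavailable joint moment above.
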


\begin{proof}[Proof]
Fix $\varepsilon\in(0,1-a)$ and $(x,y)\in X$. To simplify notation, we write $\beta=(a+\varepsilon)^{\frac{1}{2}}$. 
Let $d$ be the random moment of the first visit in $K_{\varepsilon}$. 
Suppose that
\[d_1=d,\quad d_{n+1}=d_n+d\circ T_{d_n},\]
where $n>1$ and $T_n$ are shift operators on $(X^2\times\{0,1\})^{\infty}$, i.e. 
$T_n((x_k,y_k,\theta_k)_{k\in N})=(x_{k+n},y_{k+n},\theta_{k+n})_{k\in N}$.
Theorem $\ref{theorem1}$ implies that every $d_n$ is $C_{x,y}^{\infty}$-a.s. finished. The strong Markov property shows that
\[E_{x,y}\left(\beta^d\circ T_{d_n}|F_{d_n}\right)=E_{(X_{d_n},Y_{d_n})}\left(\beta^d\right)\quad \text{for }n\in N,\]
where $F_{d_n}$ denotes the $\sigma$-algebra on $(X^2\times\{0,1\})$ generated by $d_n$ and $\Phi=(X_n,Y_n)_{n\in N}$ is the Markov chain with transition probability function $\{C_{x,y}^{\infty}:x,y\in X\}$. 
By Theorem $\ref{theorem1}$ and the definition of $K_{\varepsilon}$ we obtain
\[E_{x,y}\left(\beta^{d_{n+1}}\right)=E_{x,y}\left(\beta^{d^n}E_{(X_{d_n},Y_{d_n})}\left(\beta^d\right)\right)\leq E_{x,y}\left(\beta^{d_n}\right)(C_1\varepsilon^{-1} 2c+C_2).\]
Fix $\eta=C_1\varepsilon^{-1} 2c+C_2$. Consequently,
\begin{align}\label{condition1}
E_{x,y}\left(\beta^{d_{n+1}}\right)\leq\eta^n E_{x,y}\left(\beta^d\right)\leq\eta^n\left(C_1\bar{V}(x,y)+C_2\right).
\end{align}
We define $\hat{\tau}((x_n,y_n,\theta_n)_{n\in N})=\inf\{n\geq 1:\; (x_n,y_n)\in K_{\varepsilon}, \ \;\theta_k=1\:\text{ for }k\geq n\}$ 
and $\sigma=\inf\{n\geq 1:\;\hat{\tau}=d_n\}$. 
By Theorem $\ref{theorem2}$ there is $n_0\in N$ such that
\begin{align}\label{condition2}
\hat{C}_{x,y}^{\infty}(\sigma>n)\leq(1-\frac{\bar{\gamma}^{n_0}}{2})^n \quad\text{for }n\in N.
\end{align}
Let $d>1$. By the H\"{o}lder inequality, $(\ref{condition1})$ and $(\ref{condition2})$ we obtain
\begin{align*}
\begin{aligned}
E_{x,y}\left(\beta^{\frac{\hat{\tau}}{p}}\right)&\leq\sum_{k=1}^{\infty}E_{x,y}\left(\beta^{\frac{d_k}{p}}1_{\sigma=k}\right)
\leq\sum_{k=1}^{\infty}\left(E_{x,y}\left(\beta^{d_k}\right)\right)^{\frac{1}{p}}\left(\hat{C}_{x,y}^{\infty}(\sigma=k)\right)^{(1-\frac{1}{p})}\\
&\leq\left(C_1\bar{V}(x,y)+C_2\right)^{\frac{1}{p}}\eta^{-\frac{1}{p}}\sum_{k=1}^{\infty}\eta^{\frac{k}{p}}(1-\frac{1}{2}\bar{\gamma}^{n_0})^{(k-1)(1-\frac{1}{p})}\\
&=\left(C_1\bar{V}(x,y)+C_2\right)^{\frac{1}{p}}\eta^{-\frac{1}{p}}(1-\frac{1}{2}\bar{\gamma}^{n_0})^{-(1-\frac{1}{p})}\sum_{k=1}^{\infty}\left(\left(\frac{\eta}{1-\frac{1}{2}\bar{\gamma}^{n_0}}\right)^{\frac{1}{p}}(1-\frac{1}{2}\bar{\gamma}^{n_0})\right)^k.
\end{aligned}
\end{align*}
For $p$ sufficiently large and $\tilde{q}=\beta^{-\frac{1}{p}}$, we get
\[E_{x,y}\left(\tilde{q}^{-\hat{\tau}}\right)=E_{x,y}\left(\beta^{\frac{\hat{\tau}}{p}}\right)\leq(1+\bar{V}(x,y))C_3\]
for some $C_3$. Since $\tau\leq\hat{\tau}$, we finish the proof.
\end{proof}

\begin{thm}\label{theorem4}
There exist $q\in(0,1)$ and $C_6>0$ such that
\[\|P_x^n-P_y^n\|_{\mathcal{L}}\leq q^nC_6(1+\bar{V}(x,y))\quad\text{for }\:x,y\in X\;\text{ and }\:n\in N.\]
\end{thm}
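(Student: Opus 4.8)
The plan is to couple the two Markov chains started from $x$ and $y$ using the construction of Sections 8--9, and to bound the Fourtet--Mourier distance between $P_x^n$ and $P_y^n$ by controlling two effects: the probability that the coupling has not yet ``locked in'' by time $n$ (governed by the coupling time $\tau$), and the distance $\varrho(X_n,Y_n)$ between the two coordinates once they are being driven by the common part $Q_{x,y}$ of the coupling. First I would recall that for any $f\in\mathcal{L}$, writing $\Phi=(X_n,Y_n)$ for the coupled chain with law $\hat C_{x,y}^\infty$ whose marginals are $P_x^\infty$ and $P_y^\infty$, we have $|\langle f,P_x^n\rangle-\langle f,P_y^n\rangle| = |E_{x,y}(f(X_n)-f(Y_n))| \le E_{x,y}(\min\{\varrho(X_n,Y_n),2\})$, since $f$ is $1$-Lipschitz and bounded by $1$. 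So it suffices to estimate $E_{x,y}(\min\{\varrho(X_n,Y_n),2\})$ and show it decays geometrically in $n$ with a constant of the form $C_6(1+\bar V(x,y))$.

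Next I would split this expectation according to whether $n \ge \hat\tau$ or $n < \hat\tau$, where $\hat\tau$ (or $\tau$) is the coupling time from the previous theorem. On the event $\{n<\hat\tau\}$ I use the crude bound $\min\{\varrho(X_n,Y_n),2\}\le 2$, so this contributes at most $2\,\mathbf{P}_{x,y}(\hat\tau>n)$; by the previous theorem, $E_{x,y}(\tilde q^{-\hat\tau})\le C_3(1+\bar V(x,y))$, and Markov's inequality gives $\mathbf{P}_{x,y}(\hat\tau>n)\le \tilde q^{\,n}C_3(1+\bar V(x,y))$. On the event $\{n\ge\hat\tau\}$, after the coupling time the chain evolves (on the $\theta=1$ part) under the kernel $Q_{x,y}$, so that $\varrho(X_{k+1},Y_{k+1})\le \lambda(t)\varrho(X_k,Y_k)$ along the common randomness; iterating and using assumption (II) in the form of $(\ref{prop:phiQb})$, i.e. $\phi(Q_b)\le a\phi(b)$, one gets that the conditional expectation of $\varrho(X_n,Y_n)$ given $\mathcal F_{\hat\tau}$ is at most $a^{\,n-\hat\tau}\varrho(X_{\hat\tau},Y_{\hat\tau})$. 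Since $(X_{\hat\tau},Y_{\hat\tau})\in K_\varepsilon$ by definition of $\hat\tau$, we have $\varrho(X_{\hat\tau},Y_{\hat\tau})\le \bar V(X_{\hat\tau},Y_{\hat\tau})<\varepsilon^{-1}2c$, a fixed constant; hence this term is bounded by $a^{\,n-\hat\tau}\varepsilon^{-1}2c$.

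The remaining task is to combine $a^{\,n-\hat\tau}$ with the tail control on $\hat\tau$. Choosing a common base $q\in(\max\{a^{1/2},\tilde q\},1)$, I would write $a^{\,n-\hat\tau}=a^{\,n}a^{-\hat\tau}$ and estimate $E_{x,y}(a^{\,n}a^{-\hat\tau}\,1_{\hat\tau\le n})\le a^{\,n}E_{x,y}(a^{-\hat\tau})$; as long as $\tilde q\ge a$ (which we may arrange, shrinking $\tilde q$ if necessary — note $a^{-\hat\tau}\le \tilde q^{-\hat\tau}$ when $\tilde q\le a\cdot$const, or more simply take $\tilde q^{1/2}$ as the new base), the moment bound from the previous theorem controls $E_{x,y}(a^{-\hat\tau})$ by $C_3(1+\bar V(x,y))$ up to a constant. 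Putting the two pieces together yields $\|P_x^n-P_y^n\|_{\mathcal L}\le q^n C_6(1+\bar V(x,y))$ with $C_6$ depending only on $c,\varepsilon,a,\tilde q,C_3$.

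The main obstacle is the bookkeeping at the coupling time: one must justify carefully that, conditionally on $\mathcal F_{\hat\tau}$ and on the event that all $\theta_k=1$ for $k\ge\hat\tau$, the pair $(X_k,Y_k)_{k\ge\hat\tau}$ evolves under $Q_{X_{\hat\tau},Y_{\hat\tau}}$ (rescaled to a probability measure on that event), so that the contraction estimate $(\ref{prop:phiQb})$ applies iteratively; the strong Markov property for $\hat\Phi$ and the definition of $\hat\tau$ make this work, but the conditioning has to be handled with the same care as in the proof of the preceding theorem. A secondary point is to balance the two decay rates $a^{1/2}$ and $\tilde q$ into a single $q<1$ and to absorb all the fixed constants (in particular $\varepsilon^{-1}2c$ coming from $K_\varepsilon$) into $C_6$.
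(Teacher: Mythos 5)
Your proposal is correct in substance and follows the same overall strategy as the paper: the coupling inequality $|\langle f,P_x^n\rangle-\langle f,P_y^n\rangle|\le E_{x,y}(\min\{\varrho(X_n,Y_n),2\})$, contraction of $\varrho(X_n,Y_n)$ after the coupling time via $(\ref{prop:phiQb})$, and Chebyshev applied to the exponential moment $E_{x,y}(\tilde q^{-\tau})$ for the tail. The one genuine structural difference is where you split: the paper cuts at the \emph{deterministic} time $n/2$, decomposing $\hat{C}_{x,y}^{\infty}$ into its restrictions to $\{\tau\le n/2\}$ and $\{\tau>n/2\}$. On the first set it applies the $\phi$-contraction only over the last $n/2$ steps (yielding $a^{n/2}$) and bounds $\phi$ at time $n/2$ by the drift estimate $(\ref{prop:barV})$, giving $a^{n/2}\bar V(x,y)+2c/(1-a)$; the second set is handled by the tail bound $\tilde q^{n/2}C_4(1+\bar V(x,y))$. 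You instead split at the random time $\hat\tau$ itself, which forces you to control $E_{x,y}(a^{-\hat\tau})$. That is fine, but your parenthetical gets the monotonicity backwards: the moment bound $E_{x,y}(\tilde q^{-\hat\tau})\le C_3(1+\bar V(x,y))$ survives \emph{enlarging} $\tilde q$, not shrinking it, so the correct move is to pass to the common base $b=\max\{a,\tilde q\}$ and write $a^{\,n-\hat\tau}\le b^{\,n-\hat\tau}=b^n b^{-\hat\tau}\le b^n\tilde q^{-\hat\tau}$ on $\{\hat\tau\le n\}$. With that fix your argument closes, and in fact yields the slightly better rate $\max\{a,\tilde q\}$ versus the paper's $\max\{a^{1/2},\tilde q^{1/2}\}$; the paper's deterministic split buys cleaner bookkeeping at the cost of a square root in the rate.

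One point you flag as the "main obstacle" deserves emphasis: $\tau$ and $\hat\tau$ are \emph{not} stopping times (deciding $\{\tau=n\}$ requires knowing $\theta_k$ for all $k\ge n$), so "conditioning on $\mathcal F_{\hat\tau}$ and applying the strong Markov property" cannot be taken literally. The rigorous substitute is exactly what the paper does: restrict the measure $\hat C_{x,y}^{\infty}$ to the event $\{\tau\le n/2\}\subset\{\theta_k=1\text{ for }k\ge n/2\}$, observe that on trajectories with $\theta=1$ over a window the pair evolves under the sub-probability kernel $Q$, and use monotonicity of $\phi$ under restriction of measures to apply $(\ref{prop:phiQb})$ step by step. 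Your version at the random time $\hat\tau$ needs the analogous restriction argument (summing over the value of $\hat\tau$ rather than conditioning); this is routine but should be written out rather than invoked as a strong Markov property.
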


\begin{proof}[Proof]
For $n\in N$ we define sets
\[A_{\frac{n}{2}}=\{t\in(X^2\times\{0,1\})^{\infty}:\:\tau(t)\leq\frac{n}{2}\},\]
\[B_{\frac{n}{2}}=\{t\in(X^2\times\{0,1\})^{\infty}:\:\tau(t)>\frac{n}{2}\}.\]
Note that $A_{\frac{n}{2}}\cap B_{\frac{n}{2}}=\emptyset$ and $A_{\frac{n}{2}}\cup B_{\frac{n}{2}}=(X^2\times\{0,1\})^{\infty}$, so for $n\in N$ we have
\[\hat{C}_{x,y}^{\infty}=\hat{C}_{x,y}^{\infty}|_{A_{\frac{n}{2}}}+\hat{C}_{x,y}^{\infty}|_{B_{\frac{n}{2}}}.\]
Hence,
\begin{align*}
\begin{aligned}
\|P_x^n-P_y^n\|_{\mathcal{L}}&=\sup_{f\in\mathcal{L}}|\int_{X^2}f(z)(P_x^n-P_y^n)(dz)|
&=\sup_{f\in\mathcal{L}}|\int_{X^2}(f(z_1)-f(z_2))(\Pi_{X^2}^*\Pi_n^*\hat{C}_{x,y}^{\infty})(dz_1,dz_2)|,
\end{aligned}
\end{align*}
where $\Pi_n:(X^2\times\{0,1\})^{\infty}\to X^2\times\{0,1\}$ are the projections on the $n$-th component and $\Pi_{X^2}:X^2\times\{0,1\}\to X^2$ is the projection on $X^2$. Now, recalling the definition of the set $\mathcal{L}$ (see (\ref{def:mathcal_L})), 
 we obtain
\begin{align*}
\begin{aligned}
\|P_x^n-P_y^n\|_{\mathcal{L}}&=\sup_{f\in\mathcal{L}}\Big|\int_{X^2}(f(z_1)-f(z_2))(\Pi_{X^2}^*\Pi_n^*\hat{C}_{x,y}^{\infty}|_{A_{\frac{n}{2}}})(dz_1,dz_2)\\
&\qquad+\int_{X^2}(f(z_1)-f(z_2))(\Pi_{X^2}^*\Pi_n^*\hat{C}_{x,y}^{\infty}|_{B_{\frac{n}{2}}})(dz_1,dz_2)\Big|\\
&\leq\sup_{f\in\mathcal{L}}\Big|\int_{X^2}(f(z_1)-f(z_2))(\Pi_{X^2}^*\Pi_n^*\hat{C}_{x,y}^{\infty}|_{A_{\frac{n}{2}}})(dz_1,dz_2)\Big|+2\hat{C}_{x,y}^{\infty}(B_{\frac{n}{2}})\\
&\leq\sup_{f\in\mathcal{L}}\Big|\int_{X^2}\varrho(z_1,z_2)(\Pi_{X^2}^*\Pi_n^*\hat{C}_{x,y}^{\infty}|_{A_{\frac{n}{2}}})(dz_1,dz_2)\Big|+2\hat{C}_{x,y}^{\infty}(B_{\frac{n}{2}}).
\end{aligned}
\end{align*}
Note that by iterative application of $(\ref{prop:phiQb})$ we obtain
\begin{align*}
\begin{aligned}
\int_{X^2}\varrho(z_1,z_2)(\Pi_{X^2}^*\Pi_n^*\hat{C}_{x,y}^{\infty}|_{A_{\frac{n}{2}}})(dz_1,dz_2)=\phi(\Pi_{X^2}^*\Pi_n^*(\hat{C}_{x,y}^{\infty}|_{A_{\frac{n}{2}}}))&\leq a^{\frac{n}{2}}\phi(\Pi_{X^2}^*\Pi_{\frac{n}{2}}^*(\hat{C}_{x,y}^{\infty}|_{A_{\frac{n}{2}}})).
\end{aligned}
\end{align*}
Then it follows from $(\ref{prop:barV})$ and $(\ref{prop:phi})$ that
\begin{align*}
\begin{aligned}
\phi(\Pi_{X^2}^*\Pi_{\frac{n}{2}}^*(\hat{C}_{x,y}^{\infty}|_{A_{\frac{n}{2}}}))\leq a^{\frac{n}{2}}\bar{V}(x,y)+\frac{2c}{1-a}
\end{aligned}
\end{align*}
We obtain coupling inequality
\[\|P_x^n-P_y^n\|_{\mathcal{L}}\leq a^{\frac{n}{2}}\left(a^{\frac{n}{2}}\bar{V}(x,y)+\frac{2c}{1-a}\right)+2\hat{C}_{x,y}^{\infty}(B_{\frac{n}{2}}).\]
It follows from Theorem $\ref{theorem4}$ and the Chebyshev inequality that
\begin{align*}
\begin{aligned}
\hat{C}_{x,y}^{\infty}(B_{\frac{n}{2}})&=\hat{C}_{x,y}^{\infty}(\{\tau>\frac{n}{2}\})
&=\hat{C}_{x,y}^{\infty}(\{\tilde{q}^{-\tau}\leq\tilde{q}^{-\frac{n}{2}}\})
&\leq\frac{E_{x,y}\left(\tilde{q}^{-\tau}\right)}{\tilde{q}^{-\frac{n}{2}}}&\leq\tilde{q}^{\frac{n}{2}}C_4(1+\bar{V}(x,y))
\end{aligned}
\end{align*}
for some $\tilde{q}\in(0,1)$ and $C_4>0$. 
Finally,
\[\|P_x^n-P_y^n\|_{\mathcal{L}}\leq a^{\frac{n}{2}}C_5(1+\bar{V}(x,y))+2\tilde{q}^{\frac{n}{2}}C_4(1+\bar{V}(x,y)),\]
where $C_5=\max\{a^{\frac{n}{2}},(1-a)^{-1}2c\}$. Setting $q:=\max\{a^{\frac{1}{2}},\tilde{q}^{\frac{1}{2}}\}$ and $C_6:=C_5+2C_4$, gives our claim.
\end{proof}

\section{Proof of the main theorem}

Theorem $\ref{theorem4}$ is essential to the following proof. 
\begin{proof}[Proof]
Theorem $\ref{theorem4}$ implies that
\[\|P_x^n-P_y^n\|_{\mathcal{L}}\leq q^n{C_6}(1+\bar{V}(x,y))\quad\text{for }\:x,y\in X\;\text{ and }\:n\in N,\]
where $q$ and $C_6$ are the appropriate constants. Obviously,
\begin{align*}
\begin{aligned}
\|P^n{\mu}-\mu_*\|_{\mathcal{L}}&=\|P^n{\mu}-P^n{\mu_*}\|_{\mathcal{L}}
&=\sup_{f\in\mathcal{L}}\left|\int_X f(z)P^n{\mu}(dz)-\int_Xf(z)P^n{\mu_*}(dz)\right|.
\end{aligned}
\end{align*}
Note that
\begin{align*}
\begin{aligned}
\int_X f(z)P^n{\mu}(dz)-\int_Xf(z)P^n{\mu_*}(dz)&=\int_X\int_Xf(z)P_x^n(dz)\mu(dx)-\int_X\int_Xf(z)P_y^n(dz)\mu_*(dy)\\
&=\int_X\int_X\left(\int_Xf(z)P^n_x(dz)-\int_Xf(z)P^n_{y}(dz)\right)\mu_*(dy)\mu(dx)\\
&\leq\int_X\int_X\|P_x^n-P_y^n\|_{\mathcal{L}}\mu_*(dy)\mu(dx)\\
&\leq q^nC,
\end{aligned}
\end{align*}
where $C:=\int_X\int_XC_6(1+\bar{V}(x,y))\mu_*(dy)\mu(dx)$. Since $C$ is dependant only on $\mu$, the proof is complete.
\end{proof}

\vspace{10mm}

\end{document}